\theoremstyle{plain}
\newtheorem{theorem}{Theorem}[section]
\newtheorem{lemma}[theorem]{Lemma}
\newtheorem{proposition}[theorem]{Proposition}
\newtheorem{corollary}[theorem]{Corollary}
\theoremstyle{definition}
\numberwithin{equation}{section}
\newcommand{\R}{\mathbb{R}}
\newcommand{\abs}[2][]{#1\lvert #2 #1\rvert}
\newcommand{\FF}{{\mathcal S}}
\title{Global bifurcation and highest waves on water of finite depth}
\author{Vladimir Kozlov$^1$, Evgeniy Lokharu$^1$}
\address{$^1$Department of Mathematics, Link\"oping University, SE-581 83 Link\"oping, Sweden}
\keywords{Global bifurcation, extreme waves, Stokes waves, vorticity}
\begin{document}
	
\begin{abstract}

We consider the two-dimensional problem for steady water waves  with vorticity on water of finite depth. While neglecting the effects of surface tension we construct connected families of large amplitude periodic waves approaching the limiting wave, which is either a solitary wave, the highest solitary wave, the highest Stokes wave or a Stokes wave with a breaking profile. In particular, when the vorticity is nonnegative we prove the existence of highest Stokes waves with an included angle of 120 degrees. In contrast to previous studies we fix the Bernoulli constant and consider the wavelength as a bifurcation parameter, which guarantees that the limiting wave has a finite depth. In fact, this is the first rigorous proof of the existence of extreme Stokes waves with vorticity on water of finite depth. Beside the existence of highest waves we provide a new result about the regularity of Stokes waves of arbitrary amplitude (including extreme waves). Furthermore, we prove several new facts about steady waves, such as a lower bound for the wavelength of Stokes waves, while also eliminate a possibility of the wave breaking for waves with non-negative vorticity.
	
\end{abstract}

\maketitle

\section{Introduction} \label{s:introduction}

Highest or extreme travelling waves are remarkable objects in the mathematical theory of water waves. The latter are two-dimensional travelling gravity waves enjoying stagnation at every crest, where the relative velocity field vanishes and the surface profile forms a sharp included angle of $120$ degrees. Such extreme waves are referred to as highest waves  because the surface profile at every crest reaches its maximum possible value for a fixed total head (Bernoulli's constant). The first mention of extreme waves is due to Stokes in \cite{Stokes80}, who gave an informal argument showing that the angle of the profile with a simple discontinuity must be $120^\circ$. Thus, Stokes conjectured that there exist nonlinear periodic waves of greatest height enjoying stagnation at every crest whose surface profiles form a sharp angle of $120$ degrees (see Figure 1). This is a remarkable conjecture for the time since then only small-amplitude approximate solutions were known, while extreme waves is a nonlinear phenomenon about waves of large amplitude. 

The existence of periodic wave which are not small is a complicated problem by itself. The first construction of large-amplitude waves was given by Krasovskii in \cite{Krasovskii60} who proved that for any given flux $Q$, period $L$ and $\beta \in (0,\tfrac{\pi}{6})$ there exists a Stokes wave with $\max \theta = \beta$, where $\theta$ is the inclination angle to the horizontal. Even so the waves found by Krasovskii are of large amplitude it was not clear if they are close to stagnation or not. Later, Keady and Norbury \cite{KeadyNorbury78} showed by applying the global bifurcation theory to the Nekrasov equation that there exist Stokes waves which are arbitrary close to the stagnation. That is for any speed at the crest $q_c \in (0,c)$, period $L$ and flux $Q$ there exists the corresponding Stokes wave; here $c$ is the wave speed. Now one could think of constructing a highest wave by passing to the limit. This was done by Toland \cite{Toland1978a} who proved the existence of highest Stokes waves with surface stagnation, partly verifying the Stokes conjecture in the infinite depth case. For waves on finite depth this was established by McLeod \cite{McLeod1997} (results were obtained around 1979 and published in 1997), who also showed that the inclination angle might overcome $\tfrac16 \pi$ for waves close to the stagnation. Mentioned results for periodic waves were extended to solitary waves by Amick and Toland \cite{AmickToland81a, AmickToland81b}, who in particular obtained the existence of the highest solitary wave. 

Though highest Stokes and solitary waves in the irrotational setting were already constructed in 1981, the question about their regularity and geometry remained open. It was settled independently by Amick, Fraenkel and Toland in \cite{AmickFraenkelToland82} and by Plotnikov \cite{Plotnikov82}. They proved that any highest wave forms a sharp included angle of $120^\circ$ at every crest just as predicted by Stokes. Later Fraenkel gave another, more direct and constructive proof in  \cite{Fraenkel2006}. Beside, Plotnikov and Toland \cite{Plotnikov2004} verified the remaining part of the Stokes conjecture about the convexity of highest waves. Finally, Amick and Fraenkel \cite{Amick1987a} obtained an asymptotic expansion of a periodic highest wave near a stagnation point.

The classical papers mentioned above concern with the water wave problem for Stokes and solitary waves, though it is known that other types of waves exist. See for instance \cite{Zufiria1987, Baesens1992} and a recent study \cite{BuffoniDancerToland00a} on subharmonic bifurcations. For an analysis of singularities of such waves one needs more general methods developed recently by Varvaruca and Weiss \cite{Varvaruca2011}. They proved that irrespectively of the wave geometry all stagnation points are isolated, while the surface profile must form there an included angle of $120$ degrees. 

The rotational theory is much less developed. The first construction of large-amplitude waves is due to Constantin and Strauss in \cite{ConstantinStrauss04}. Using the degree theory and a continuation argument they proved that for an arbitrary vorticity distribution there exist continuous families of Stokes waves with a fixed period, bifurcating from a uniform stream and approaching "stagnation". Essentially, they show that there exists a sequence of Stokes waves such that $u \to c$ along the sequence, where $u$ is the horizontal component of the velocity field and $c$ is the wave speed. Thus, the limiting wave, if exists, has a critical point ($u = c$) somewhere in the fluid. We find a stagnation point if this happens at the crest, but for rotational waves this is not always the case. Furthermore, depending on the sign of the vorticity, there exist laminar flows with stagnation points either everywhere on the surface or at the bottom. Thus, the limiting wave might be trivial. Another possibility is that the Bernoulli constant becomes unbounded and waves converge to the highest wave but on the infinite depth. The same problem appears in the subsequent papers by Constantin and Strauss \cite{Constantin_2011}, Varvaruca \cite{Varvaruca09}, Constantin, Strauss and Varvaruca \cite{Constantin2016} and Varholm \cite{Varholm2019}. Thus, the existence of highest waves with vorticity is still an open problem.

The presence of laminar flows with surface stagnation shows that the geometry of highest waves with vorticity might be different, compared to the irrotational setting, when the surface profile always forms an angle of $120^\circ$. However, as shown by Varvaruca \cite{Varvaruca09} and later by Varvaruca \& Weiss \cite{Varvaruca2012}, there are only two possibilities: the Stokes corner of $120^\circ$ or a horizontally flat stagnation. We shall emphasize that it is not known if the second options is really possible for waves other than laminar flows.

In the present paper we construct highest Stokes waves with a favourable vorticity distribution, while also prove a general statement for an arbitrary vorticity distribution, which is of separate interest. Beside the existence, we also prove some new facts about steady waves. In Theorem 3.4 we give a lower bound for the wavelength, depending on the Bernoulli constant. A possibility of the wave breaking is eliminated in Proposition 3.2 for non-negative vorticity distributions, which is related to results of \cite{StraussWheeler16}. Furthermore, we obtain a new regularity statement for Stokes waves in the irrotational setting, see our Theorem 2.4. It is remarkable that the claim is valid even for extreme waves.

\begin{figure}[t!]
	\centering
	\begin{subfigure}[t]{0.5\textwidth}
		\centering
		\includegraphics[scale=0.8]{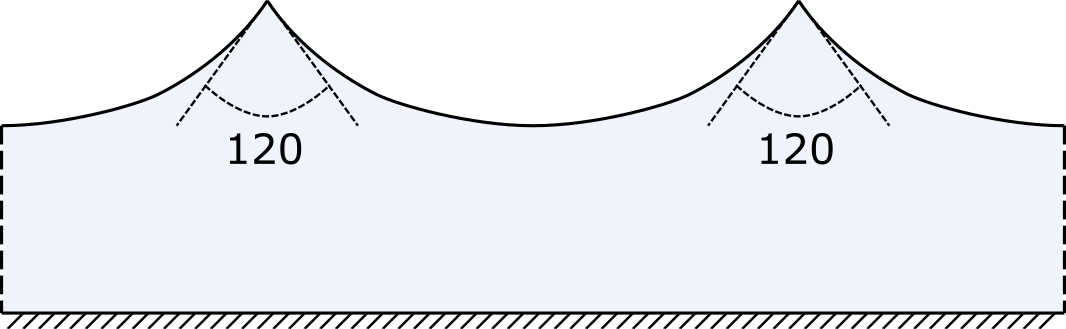}
		\caption{an extreme Stokes wave}
		%\label{fig:H1}
	\end{subfigure}%
	~ 
	\begin{subfigure}[t]{0.5\textwidth}
		\centering
		\includegraphics[scale=0.8]{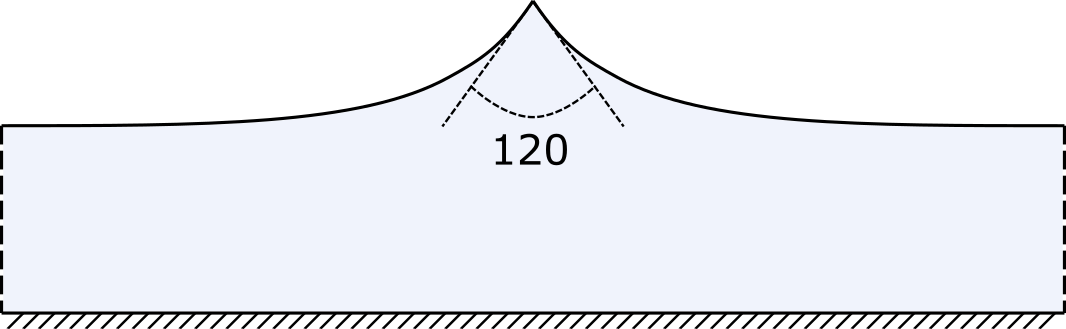} \caption{an extreme solitary wave}
		%\label{fig:phase}and
	\end{subfigure}
	\caption{}
\end{figure}

\section{Statement of the problem} \label{s:introduction}

We consider the classical water wave problem for two-dimensional steady waves with vorticity on water of finite depth. We neglect effects of surface tension and consider an ideal fluid of constant (unit) density. Thus, in an appropriate coordinate system moving along with the wave, stationary Euler equations are given by
\begin{subequations}\label{eqn:trav}
	\begin{align}
	\label{eqn:u}
	(u-c)u_x + vu_y & = -P_x,   \\
	\label{eqn:v}
	(u-c)v_x + vv_y & = -P_y-g, \\
	\label{eqn:incomp}
	u_x + v_y &= 0, 
	\end{align}
	which holds true in a two-dimensional fluid domain $D_\eta$, defined by the inequality
	\[
	0 < y < \eta(x).
	\]
	Here $(u,v)$ are components of the velocity field, $y = \eta(x)$ is the surface profile, $c$ is the wave speed, $P$ is the pressure and $g$ is the gravitational constant. The corresponding boundary conditions are
	\begin{alignat}{2}
	\label{eqn:kinbot}
	v &= 0&\qquad& \text{on } y=0,\\
	\label{eqn:kintop}
	v &= (u-c)\eta_x && \text{on } y=\eta,\\
	\label{eqn:dyn}
	P &= P_{\mathrm{atm}} && \text{on } y=\eta.
	\end{alignat}
\end{subequations}
It is often assumed in the literature that the flow is irrotational, that is $v_x - u_y$ is zero everywhere in the fluid domain. Under this assumption components of the velocity field are harmonic functions, which allows to apply methods of complex analysis. Being a convenient simplification it forbids modeling of non-uniform currents, commonly occurring in nature. In the present paper we will consider rotational flows, where the vorticity function is defined by
\begin{equation} \label{vort}
\omega = v_x - u_y.
\end{equation}
Throughout the paper we assume that the flow is free from stagnation points and the horizontal component of the relative velocity field does not change sign, that is
\begin{equation} \label{uni}
u-c < 0 
\end{equation}
everywhere in the fluid. We call such flows unidirectional.
% TODO : problems in the bibliography with (at least)

In the two-dimensional setup relation \eqref{eqn:incomp} allows to reformulate the problem in terms of a stream function $\psi$, defined implicitly by relations
\[
\psi_y = c-u, \ \ \psi_x =  v.
\]
This determines $\psi$ up to an additive constant, while relations \eqref{eqn:kinbot},\eqref{eqn:kinbot} force $\psi$ to be constant along the boundaries. Thus, by subtracting a suitable constant, we can always assume that
\[
\psi = m, \ \ y = \eta; \ \ \psi = 0, \ \ y = 0.
\]
Here $m$ is the mass flux, defined by
\[
m = \int_0^\eta (u-c) dy.
\]
In what follows we will use non-dimensional variables proposed by Keady \& Norbury \cite{KeadyNorbury78}, where lengths and velocities are scaled by $(m^2/g)^{1/3}$ and $(mg)^{1/3}$ respectively; in new units $m=1$ and $g=1$. For simplicity we keep the same notations for $\eta$ and $\psi$.

Taking the curl of Euler equations \eqref{eqn:u}-\eqref{eqn:incomp} one checks that the vorticity function $\omega$ defined by \eqref{vort} is constant along paths tangent everywhere to the relative velocity field $(u-c,v)$; see \cite{Constantin11b} for more details. Having the same property by the definition, stream function $\psi$ is strictly monotone by \eqref{uni} on every vertical interval inside the fluid region. These observations together show that $\omega$ depends only on values of the stream function, that is
\[
\omega = \omega(\psi).
\]
This property and Bernoulli's law allow to express the pressure $P$ as
\begin{align}
\label{eqn:bernoulli}
P-P_\mathrm{atm} + \frac 12\abs{\nabla\psi}^2 + y  + \Omega(\psi) - \Omega(1) = const,
\end{align}
where 
\begin{align*}
\Omega(\psi) = \int_0^\psi \omega(p)\,dp
\end{align*}
is a primitive of the vorticity function $\omega(\psi)$. Thus, we can eliminate the pressure from equations and obtain the following problem:

	\begin{subequations}\label{sys:stream}
	\begin{alignat}{2}
	\label{sys:stream:lap}
	\Delta\psi + \omega(\psi)&=0 &\qquad& \text{for } 0 < y < \eta,\\
	\label{sys:stream:bern}
	\tfrac 12\abs{\nabla\psi}^2 +  y  &= r &\quad& \text{on }y=\eta,\\
	\label{sys:stream:kintop} 
	\psi  &= 1 &\quad& \text{on }y=\eta,\\
	\label{sys:stream:kinbot} 
	\psi  &= 0 &\quad& \text{on }y=0.
	\end{alignat}
The assumption \eqref{uni} transforms into 	
\begin{equation}\label{uni:psi}
	\psi_y > 0 \ \ \text{for } 0 \leq y \leq \eta.
	\end{equation}	
The latter forbids the presence of stagnation points, in particular surface stagnation points that occur when $\eta(x) = r$ for some $x \in \R$. As for regularity, we assume that $\psi \in C^{2,\gamma}(\overline{D_\eta})$ and $\eta \in C^{2,\gamma}(\R)$, provided $\omega \in C^{1,\gamma}([0,1])$ for some $\gamma \in (0,1)$. The H\"older exponent $\gamma$ will remain unchanged in what follows. We will also consider highest waves, for which $\max \eta = r$. Such waves enjoy stagnation at every crest, where $\psi_y = \psi_x = 0$. As we will show below, highest waves are $C^{2,\gamma}$ everywhere, except the crest points, where the stream function $\psi$ is $C^{1}$-continuous, while the surface profile $\eta$ is of the class $C^{1}$ on each closed interval without stagnation points inside (though it can be a boundary point of the interval). In the irrotational case the regularity of the stream function can be significantly improved (Theorem \ref{thm:reg}).\\

In what follows we will consider families of Stokes waves, periodic solutions to \eqref{sys:stream} that are monotone between each neighbour crest and trough and symmetric around every vertical line passing through crests and troughs; see \cite{Constantin2007, Hur07} for related results on the symmetry of Stokes waves.

In our setup the Bernoulli constant $r$ is fixed, though different values of $r$ lead to different bifurcation curves and different highest waves. There are several advantages to have $r$ fixed: 
	
	\begin{itemize}
		\item first of all, it guarantees that the mean depth is separated from zero and bounded from above by constants depending on $r$; this resolves the main drawback of previous constructions, where the Bernoulli constant could become unbounded, while waves are converging to the highest wave on infinite depth;
		\item secondly, we have an improved uniform regularity (including the highest waves) along the whole bifurcation curve; see our Theorem \ref{thm:reg}, which states that there is a H\"older exponent $\alpha = \alpha(r)$, depending only on $r$, such that all stream functions (of Stokes waves) and their derivatives are uniformly H\"older continuous, that is norms $\|\psi\|_{C^{1,\alpha}(\overline{D_\eta})}$ are uniformly bounded by a constant determined by $r$; in particular this specifies the type of convergence of regular Stokes to the highest wave;
		\item there is no stagnation occur inside or at the bottom; 
		\item finally, this approach allows to construct solitary waves and even extreme solitary waves.
	\end{itemize}
	
The Bernoulli constant $r$ in \eqref{sys:stream:bern} can not be arbitrary. It was shown in \cite{Kozlov2012, Kozlov2015} that $r >R_c$ for any non-laminar solution, where $R_c$ is the minimal Bernoulli constant among all laminar flows (represented by solutions to \eqref{sys:stream}, with $\psi = U(y)$ and $\eta = const$). To simplify our considerations we will also assume that $r < d_0$, where $d_0$ is the maximal depth among all laminar solutions. In the irrotational case $d_0 = +\infty$, so this is not a restriction. In fact, it only matters when the vorticity function $\omega$ is negative; see \cite{Kozlov2015}.

\end{subequations}	

%\subsection{Admissible vorticity distributions}

%In what follows we will consider vorticity functions $\omega \in C^{1,\gamma}([0,1])$ for which the water wave problem \eqref{sys:stream} has the following property: there exists a constant $M = M(\omega)$ such that every Stokes wave solution $(\psi,\eta)$ of \eqref{sys:stream} satisfies $|\eta'(x)| \leq M$ for all $x \in \R$. For instance, $\omega = 0$ is admissible, which follows from \cite{Amick1987}. On the other hand, any constant positive vorticity is also admissible as shown in \cite{StraussWheeler16}.

\subsection{Partial hodograph transform}

Under assumption \eqref{uni:psi} we can apply the partial hodograph transform introduced by Dubreil-Jacotin in \cite{DubreilJacotin34}. Thus, we present new independent variables
\[
q = x, \ \ p = \psi(x,y),
\]
while new unknown function $\hat{h}(q,p)$ (height function) is defined from the identity
\[
\hat{h}(q,p) = y.
\]
Note that it is related to the stream function $\psi$ through the formulas
\begin{equation} \label{height:stream}
\psi_x = - \frac{\hat{h}_q}{\hat{h}_p}, \ \ \psi_y = \frac{1}{\hat{h}_p},
\end{equation}
where 
\begin{subequations}\label{sys:h}
	\begin{equation} \label{sys:h:uni}
	\hat{h}_p > 0
	\end{equation}
	throughout the fluid domain by \eqref{uni:psi}. An advantage of using new variables is in that instead of two unknown functions $\eta(x)$ and $\psi(x,y)$ with an unknown domain of definition, we have one function $\hat{h}(q,p)$ defined in a fixed strip $S = \R \times (0,1)$. An equivalent problem for $\hat{h}(q,p)$ is given by
	\begin{alignat}{2}
	\label{sys:h:main}
	\left( \frac{1+\hat{h}_q^2}{2\hat{h}_p^2} + \Omega \right)_p - \left(\frac{\hat{h}_q}{\hat{h}_p}\right)_q &=0 &\qquad& \text{in } S,\\
	\label{sys:h:top}
	\frac{1+\hat{h}_q^2}{2\hat{h}_p^2} +  \hat{h}  &= r &\quad& \text{on }p=1,\\
	\label{sys:h:bot} 
	\hat{h} &= 0 &\quad& \text{on }p=0.
	\end{alignat}
\end{subequations}
The wave profile $\eta$ is recovered from the boundary value of $\hat{h}$ on $p = 1$:
\[
\eta(q) = \hat{h}(q,1), \ \ q \in \R.
\]
This formulation is in fact equivalent to \eqref{sys:stream}; see \cite{ConstantinStrauss04} for details.

\subsection{Stream solutions}

Uniform streams, flows with flat surface and velocity field depending only on the depth, are represented by stream solutions $H(p;s)$ of \eqref{sys:h}. The latter depend only on $p$-variable and are parametrized by a parameter $s>s_0$, which equals to the velocity of the flow at the bottom (in the original variables). Solving \eqref{sys:h:main} for $H$ explicitly, one finds
\[
H(p;s) = \int_0^p \frac{1}{\sqrt{s^2- 2 \Omega(p')}} dp',
\]
where $\Omega(p) = \int_0^p \omega(p') dp'$ is a primitive of the vorticity function. Note that $s^2 \geq s_0^2 = \max_{p \in [0,1]} 2 \Omega(p)$. The corresponding to $H(p;s)$ depth and the Bernoulli constant $R(s)$ are given explicitly by
\[
d(s) = \int_0^1 \frac{1}{\sqrt{s^2- 2 \Omega(p')}} dp', \ \ R(s) = \tfrac12s^2- \Omega(1)  + d(s).
\]
Note that $d(s) < d_0:=d(s_0)$ for all $s \in (s_0,+\infty)$, where $d_0$ might be finite or not, depending on the vorticity function; for instance, $d_0< + \infty$ when $\Omega(p) < 0$ for all $p \in (0,1]$ as well as $\omega(0) < 0$ (negative vorticity), while $d_0 = +\infty$ for $\omega = 0$. It is clear that constants $R_0 = R(s_0)$ and $d_0$ are finite or not at the same time. 

It is shown in \cite{Kozlov2015} that $R(s)$ attains its minimum $R_c$ for some $s_c > s_0$, determined from
\[
\int_0^1 \frac{1}{(s_c^2- 2 \Omega(p'))^{\tfrac32}} dp' = 1.
\]
As shown in \cite{Kozlov2015} any steady motion, other than a uniform stream, has $r > R_c$. Furthermore, it was shown that for "positive" vorticity functions we also have $r < R_0$ whenever $R_0$ is finite. On the other hand, for negative vorticity functions there is a weaker inequality $r < R_0 - \Omega(1)$, obtained recently in \cite{Lokharu2020}. The bound $r < R_0$ is important because it guarantees that there exist so-called subcrtitical and supercritical uniform streams corresponding to $r$. The latter are defined by parameters $s$ that solve equation $R(s) = r$, which possesses exactly two roots $s = s_-(r)$ and $s = s_+(r)$ with $s_-(r) < s_c <  s_+(r)$, provided $r > R_c$ and $r < R_0$. The corresponding depths are $d_+(r) = d(s_-(r))$ and $d_-(r) = d(s_+(r))$. The existence of a subcritical stream is essential for our purposes, since only subcritical streams with $d = d_+(r)$ and $s = s_-(r)$ admit a local bifurcation curves of small-amplitude Stokes waves; see \cite{Kozlov2014, Kozlov2017a, Groves2008}. Note that our assumption $r < d_0$ from Theorem \ref{thm:vort} guarantees that $r < R_0$ and the subcritical stream is well-defined. Another purpose of $r < d_0$ is that it forbids solutions along the bifurcation curve from approaching stagnation at the bottom. A precise statement is given in Proposition \ref{p:psiy:bot} and can be expressed in a form of inequalities
\[
\hat{s} \leq \psi_y(x,0) \leq \check{s}, \ \ x \in \R,
\]
where $\hat{s}$ and $\check{s}$ are defined from the identities $d(\hat{s}) = \max \eta$ and  $d(\check{s}) = \min \eta$. In particular we require that there exists a uniform stream with $d = \max \eta$, which is ensured by the assumption $r < d_0$.

It is worth mentioning that inequality $r < d_0$ is believed to be true for all steady waves other than streams. It is known to be true for a large class of vorticity functions (classes (i) and (iii) according to the classification in \cite{Kozlov2015}) except vorticity functions $\omega$, for which $\Omega(p) < 0$ on $(0,1]$ and $\omega(0) < 0$. We believe that in this case the statement is also true, even so a rigorous justification is not available at the moment.

Let us recall another fundamental bound for steady waves from \cite{Kozlov2015} (see also \cite{Kozlov2007, Kozlov2009, Kozlov2012, Keady1975, KeadyNorbury78}). It is known that an arbitrary solution, other than a uniform stream, is subject to a strict inequality
\begin{equation} \label{boundeta}
\max\eta > d_+(r),
\end{equation}
where $d_+(r)$ is the depth of the subcritical laminar flow. When the Bernoulli constant $r$ is fixed, relation \eqref{boundeta} determines a positive cone of functions; this fact will be used in our proof of Theorem \ref{thm:vort}.

\subsection{Problem with a fixed period}

Assume that $\hat{h}$ has period $\Lambda$ with respect to $q$-variable. Then we put
\[
h(q,p) = \hat{h}(\Lambda(2\pi)^{-1} q ,p),
\]
which is $2\pi$-periodic function that solves 
\begin{subequations}\label{sys:hs}
	\begin{alignat}{2}
	\label{sys:hs:main}
	\left( \frac{1+\lambda^2 h_q^2}{2h_p^2} + \Omega \right)_p - \lambda^2 \left(\frac{h_q}{h_p}\right)_q &=0 &\qquad& \text{in } S,\\
	\label{sys:hs:top}
	\frac{1+\lambda^2 h_q^2}{2 h_p^2} +  h  &= r &\quad& \text{on }p=1,\\
	\label{sys:hs:bot} 
	h &= 0 &\quad& \text{on }p=0,
	\end{alignat}
where $\lambda = 2\pi \Lambda^{-1}$. As before, we require
\begin{equation} \label{sys:hs:uni}
h_p > 0.
\end{equation}	
\end{subequations}
We will consider this problems for $h \in C^{2,\gamma}_{per}(\overline{S})$, where the latter is a subspace of $C^{2,\gamma}(\overline{S})$, that consists of all even functions with period $2\pi$. In what follows $\lambda = 2\pi\Lambda^{-1}$ will play the role of the bifurcation parameter.

\subsection{Statements of main results}

\begin{theorem}[Arbitrary vorticity]\label{thm:vort}
	Assume that $\omega \in C^{1,\gamma}([0,1])$. Then for any $r \in (R_c,d_0)$ there exist functions ${\mathcal C}:[0,+\infty) \to C^{2,\gamma}_{per}(\overline{S})$ and $\lambda:[0,+\infty) \to (0,+\infty)$ with the following properties:
	
	\begin{itemize}
		\item[(i)] ${\mathcal C}(0)$ is the subcritical stream solution $H(p;s_+(r))$;
		\item[(ii)] ${\mathcal C}(t) = h(q,p; t), t>0$ is a solution to \eqref{sys:hs} with $\lambda = \lambda(t)$; the corresponding solution $(\psi,\eta)$ represents a Stokes wave with the period $\Lambda = \lambda(t)^{-1}$;
		\item[(iii)] $({\mathcal C}(t),\lambda(t))$ has a real analytic reparametrization locally around each $t \geq 0$.
		
	\end{itemize}
	Furthermore, there exists a sequence $t_j \to +\infty$ such that we either have
	\begin{itemize}
		\item[(I)] $\max_{x\in\R}\eta(x;t_j) \to r$ as $j \to +\infty$ and $\Lambda(t_j) \to \Lambda < + \infty, \Lambda \neq 0$ (stagnation at every crest);
		\item[(II)]  $\lim_{j \to +\infty}\max_{x\in\R}\eta(x;t_j) < r$ and $\Lambda(t_j) \to +\infty$ as $j \to +\infty$ (solitary wave);
		\item[(III)]  $\max_{x\in\R}\eta(x;t_j) \to r$  and $\Lambda(t_j) \to + \infty$ as $j \to +\infty$ (extreme solitary wave);
		\item[(IV)] $\max_{x \in \R} |\eta'(x;t_j)| \to +\infty$  as $j \to +\infty$ (wave breaking).
	\end{itemize}
	Here $\eta(x;t)$ is the surface profile corresponding to $h(q,p;t)$, $t \geq 0$. 
\end{theorem}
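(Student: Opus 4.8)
The plan is to recast the fixed-period problem \eqref{sys:hs} as a single real-analytic operator equation and then run the analytic global bifurcation machinery of Dancer and of Buffoni and Toland, reading off the four alternatives (I)--(IV) from the way the resulting branch escapes compact sets. Concretely, I would work with the map
\[
\mathcal{F}(h,\lambda) = \Big( \big( \tfrac{1+\lambda^2 h_q^2}{2 h_p^2} + \Omega\big)_p - \lambda^2 \big(\tfrac{h_q}{h_p}\big)_q, \ \ \tfrac{1+\lambda^2 h_q^2}{2 h_p^2} + h - r \big|_{p=1} \Big),
\]
defined on the open set $\mathcal{O}\subset X\times(0,+\infty)$ of pairs $(h,\lambda)$ with $h\in X$, $h|_{p=0}=0$ and $h_p>0$ on $\overline{S}$, where $X$ is the closed subspace of even $2\pi$-periodic functions in $C^{2,\gamma}(\overline{S})$ and the target encodes the interior equation (in $C^{0,\gamma}$) together with the surface condition \eqref{sys:hs:top} (in $C^{1,\gamma}$). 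Since the denominators do not vanish on $\mathcal{O}$ and $\Omega\in C^{2,\gamma}$, the map $\mathcal{F}$ is real-analytic, and $D_h\mathcal{F}$ is a uniformly elliptic oblique boundary value problem, hence Fredholm of index zero; the $q$-independent zeros of $\mathcal{F}$ are exactly the stream solutions $H(\,\cdot\,;s)$.

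Next I would establish the local bifurcation. Linearising at the subcritical stream and analysing the associated Sturm--Liouville dispersion relation identifies a value $\lambda_\star$ at which $D_h\mathcal{F}$ has a one-dimensional kernel spanned by a single even mode, with the Crandall--Rabinowitz transversality condition satisfied; this is precisely the local theory of \cite{Kozlov2014, Kozlov2017a, Groves2008}. It yields a local analytic curve of genuine Stokes waves issuing from $\mathcal{C}(0)=H(\,\cdot\,;s_+(r))$ at $\lambda=\lambda_\star$, which provides the germ of $\mathcal{C}(t)$ near $t=0$ and establishes (i). I would then invoke the real-analytic global bifurcation theorem (Dancer; Buffoni and Toland, cf.\ \cite{BuffoniDancerToland00a}): as $\mathcal{F}$ is analytic, Fredholm of index zero, and proper on each closed bounded subset of $\mathcal{O}$ bounded away from $\partial\mathcal{O}$ (a consequence of Schauder estimates and the compact embedding $C^{2,\gamma}\hookrightarrow C^{2,\gamma'}$), the germ extends to a global continuous curve $\{(\mathcal{C}(t),\lambda(t)):t\ge0\}$ admitting a local real-analytic reparametrisation, giving (ii)--(iii). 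The abstract theorem then leaves exactly two possibilities: \emph{either} the curve forms a closed loop \emph{or} it leaves every compact subset $K\Subset\mathcal{O}$.

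The loop is ruled out by the cone estimate \eqref{boundeta}: every nontrivial solution on the branch satisfies $\max\eta>d_+(r)$ strictly, so the branch can never return to the stream; only the escape alternative survives. It then remains to decode this escape into (I)--(IV), which is where the main work lies. I would assemble the a priori information available for fixed $r$: the surface condition \eqref{sys:hs:top} forces $\max\eta\le r$, while \eqref{boundeta} gives $\max\eta>d_+(r)$, so $\max\eta$ stays in $(d_+(r),r]$; Proposition \ref{p:psiy:bot} (using $r<d_0$) keeps $\psi_y$ bounded away from zero at the bottom, so any degeneration $\min h_p\to0$ can occur only at the surface, i.e.\ exactly when $\max\eta\to r$; and the lower bound for the wavelength (Theorem 3.4) excludes $\lambda(t_j)\to+\infty$. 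Passing to a subsequence $t_j\to+\infty$ realising the escape, exactly one of three degenerations must occur: $\min h_p\to0$ with $\lambda$ bounded above and below (surface stagnation at a finite period, alternative (I)); $\lambda(t_j)\to0$, i.e.\ $\Lambda\to+\infty$ (alternative (II) or (III) according as $\limsup\max\eta<r$ or $\max\eta\to r$); or $\max_x|\eta'|\to+\infty$ (gradient blow-up, alternative (IV)).

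The hard part will be precisely this last step: verifying properness up to $\partial\mathcal{O}$, proving the uniform bounds just quoted so that no other mode of escape is possible, and separating degeneration of $h_p$ (approach to stagnation) from blow-up of $\eta'$ (wave breaking). The uniform regularity afforded by a fixed $r$ (Theorem \ref{thm:reg}) is what ultimately confines the escape to these four mutually exhaustive scenarios, and the inequalities $\max\eta\le r$ and $\max\eta>d_+(r)$ are what guarantee the limiting depth is finite and nonzero, so that the dichotomy between finite and infinite period is genuine. Once these estimates are in place, the selection of $t_j$ and the classification into (I)--(IV) follow, completing the proof of Theorem \ref{thm:vort}.
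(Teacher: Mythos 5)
Your proposal follows essentially the same route as the paper: an analytic global bifurcation argument (Dancer, Buffoni--Toland) for the height-function formulation with $r$ fixed and $\lambda$ as the parameter, local bifurcation from the subcritical stream via the Sturm--Liouville dispersion relation, compactness from Schauder estimates, the cone condition $\max\eta>d_+(r)$ to prevent a return to the trivial branch, Proposition \ref{p:psiy:bot} to confine any approach to stagnation to the free surface, and Theorem \ref{thm:wavelength} to exclude $\lambda(t_j)\to+\infty$; the paper merely packages the loop exclusion into the cone hypotheses (C1)--(C3) of Theorem \ref{th:global} and works with $w=h-H$ rather than with $h$ itself. Three small corrections are in order. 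First, the degeneration associated with stagnation is $\sup_S h_p\to+\infty$ (equivalently $\psi_y=1/h_p\to0$), not $\min h_p\to0$; the latter cannot occur since $\psi_y$ is bounded above in terms of $r$. Second, the uniform regularity you invoke from Theorem \ref{thm:reg} is stated and proved only for $\omega=0$ and plays no role in the proof of Theorem \ref{thm:vort}; for general vorticity the classification of escape modes rests on Proposition \ref{p:delta} (regularity away from stagnation, which makes norm blow-up equivalent to $\sup h_p\to\infty$ once wave breaking is excluded) together with Proposition \ref{thm:slope}. Third, claim (ii), that each $\mathcal{C}(t)$ represents a genuine Stokes wave, monotone between neighbouring crest and trough, requires the nodal analysis of Constantin--Strauss along the branch, which your sketch omits.
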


\begin{corollary}[Existence of highest waves] \label{cor:vort} Assume that $\omega \geq 0$, then there exists either 
	\begin{itemize}
		\item[(I')] an extreme Stokes wave $(\psi,\eta)$ with $\psi \in C^1(\overline{D_\eta})$, enjoying stagnation at every crest (and nowhere else), where $\eta$ forms either a sharp angle of 120 degrees or included angle of $180^{\circ}$; furthermore, $\eta$ is locally in $C^{2,\gamma}$ everywhere outside the stagnation points, while it is in $C^1$ on every closed interval that does not contain any crest in it's interior.
		\item[(II')] or a solitary wave solution $(\psi,\eta)$ with $\psi \in C^{2,\gamma}(\overline{D_\eta})$ and $\eta \in C^{2,\gamma}(\R)$;
		\item[(III')] or the highest solitary wave solution $(\psi,\eta)$ with $\psi \in C^{1}(\overline{D_\eta})$, enjoying stagnation at the crest (and nowhere else), where $\eta$ forms either a sharp angle of 120 degrees or included angle of $180^{\circ}$;  furthermore, $\eta$ is locally in $C^{2,\gamma}$ everywhere outside the crest, while it is in $C^1$ on every closed interval that does not contain the stagnation point in it's interior.		
	\end{itemize}
	Furthermore, there is a constant $r_\star \in (R_c,d_0]$ (depending only on $\omega$) such that only the first option (I') can occur for $r \in (r_\star,d_0)$. When $r$ is close to $r_c$ only the second option is possible, which leads to a solitary wave.
\end{corollary}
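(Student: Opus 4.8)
The plan is to run the global branch $\mathcal C$ of Theorem~\ref{thm:vort} through the four alternatives (I)--(IV) and to decide, using $\omega\geq 0$, which of them can survive. First I would discard wave breaking: for nonnegative vorticity the surface slope stays bounded along the whole curve by the no-breaking result (Proposition~3.2), so $\max_x\abs{\eta'(x;t_j)}$ cannot blow up and (IV) is impossible. This leaves (I), (II) and (III), which I would match to (I$'$), (II$'$) and (III$'$) respectively.

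Next I would pass to the limit along $t_j$. The uniform $C^{1,\alpha}$ bounds valid along the bifurcation curve (with $\alpha=\alpha(r)$ depending only on $r$) give precompactness of the height functions in $C^{1}_{\mathrm{loc}}(\overline S)$, and interior Schauder theory upgrades this to $C^{2,\gamma}_{\mathrm{loc}}$ away from the stagnation set $\{\psi_y=0\}$. In the bounded-period case (I) I translate a crest to $q=0$ and invoke the lower bound on the wavelength (Theorem~3.4) to keep $\Lambda(t_j)$ away from $0$, obtaining a nondegenerate periodic limit with $\max\eta=r$, i.e.\ surface stagnation at every crest; the rotational Stokes-corner analysis of Varvaruca and of Varvaruca \& Weiss \cite{Varvaruca09,Varvaruca2012} then forces each stagnation point to be either a $120^\circ$ corner or a horizontally flat ($180^\circ$) stagnation, and the highest-wave regularity recorded in the problem formulation ($\psi\in C^1(\overline{D_\eta})$, $\eta$ locally $C^{2,\gamma}$ off the crests and $C^1$ on every crest-free closed interval) yields exactly (I$'$). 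In the unbounded-period cases I center a crest at the origin and extract a limit on all of $\R$ by a diagonal argument in the spirit of Amick--Toland \cite{AmickToland81a,AmickToland81b}; the troughs converge to a supercritical uniform stream of depth $\dmin$, producing a solitary wave of elevation. Here the amplitude bound \eqref{boundeta} survives to the limit as $\max\eta\geq\dplus>\dmin$, so the crest strictly exceeds the far-field level and the limit is nontrivial: if $\lim_j\max\eta<r$ it never meets the stagnation level and is a regular $C^{2,\gamma}$ solution, giving (II$'$), while if $\max\eta\to r$ the same dichotomy and regularity give the extreme solitary wave (III$'$).

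It remains to locate $r_\star$ and to treat $r$ near $R_c$. Since $\omega\geq0$ the primitive $\Omega$ is nondecreasing, so $s_0^2=2\Omega(1)$ and hence $R_0=\tfrac12 s_0^2-\Omega(1)+d_0=d_0$. As $r\to d_0=R_0$ one has $s_-(r)\to s_0$, so $\dplus=d(s_-(r))\to d_0$, while the supercritical depth $\dmin=d(s_+(r))$ stays bounded strictly below $d_0$. If a solitary limit, (II) or (III), occurred for such $r$, its troughs would converge to $\dmin$, whereas \eqref{boundeta} together with $\max\eta\leq r$ forces the crest $\dplus<\max\eta\leq r$ up to $d_0$; the amplitude $\max\eta-\dmin$ would therefore stay bounded below by a positive constant (unboundedly so if $d_0=\infty$), exceeding the maximal amplitude of a solitary wave homoclinic to the supercritical stream once $r$ is close enough to $d_0$. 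This contradiction rules out $\Lambda\to\infty$ and leaves only (I); taking $r_\star$ to be the infimum of those $r$ for which the exclusion holds on all of $[r,d_0)$ gives $r_\star\in(R_c,d_0]$, depending only on $\omega$, and only (I$'$) survives there. Near the other end, as $r\to R_c$ the roots $s_\pm(r)$ coalesce at $s_c$, the gap $r-\dplus\to R_c-d(s_c)>0$ stays open, the bifurcating waves are small-amplitude and long, and the branch limits to a small solitary wave with crest strictly below $r$; this yields (II$'$), a genuine regular solitary wave.

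I expect the threshold step to be the main obstacle. Eliminating (IV) and passing to the limit are comparatively standard, and the relation $R_0=d_0$ together with $\dplus\to d_0$ cleanly squeezes the crest to $d_0$; the delicate point is the quantitative, $\omega$-dependent exclusion of the solitary alternatives (II), (III) for $r$ close to $d_0$, which requires a sharp flow-force or amplitude estimate for solitary waves over the supercritical stream rather than the soft compactness arguments used elsewhere.
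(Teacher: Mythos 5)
Your overall architecture (pass to a limit along the branch of Theorem~\ref{thm:vort}, match the alternatives (I)--(IV) to (I$'$)--(III$'$), and invoke Varvaruca and Varvaruca--Weiss for the structure of stagnation points) agrees with the paper, but three of your key steps do not hold as written. First, your compactness step leans on ``uniform $C^{1,\alpha}$ bounds valid along the bifurcation curve with $\alpha=\alpha(r)$''; that is Theorem~\ref{thm:reg}, which is proved only for $\omega=0$ and is not available for general $\omega\geq 0$. The paper instead truncates the profiles at the level $r-\epsilon$, uses the interior and boundary Harnack inequalities to get $\inf\psi_y\geq C(\epsilon)>0$ on the truncated domains uniformly in $j$, applies Proposition~\ref{p:delta} there, and then runs a diagonal argument over $\epsilon_k\to 0$; this is what produces a weak solution in the sense of Varvaruca that is smooth away from the points where $\eta=r$. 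Second, your dismissal of (IV) is too strong: Proposition~\ref{thm:slope} gives $|\eta'|^2\leq C_2(r-\eta)^{-1}$, so the slope is controlled only away from the stagnation level; the correct conclusion (and the one the paper draws) is that (IV) \emph{implies} (I), i.e.\ blow-up of the slope forces $\max\eta\to r$, not that (IV) is impossible. Third, the regularity package in (I$'$)/(III$'$) ($\psi\in C^1(\overline{D_\eta})$ up to the stagnation points, $C^1$ profile on crest-free closed intervals) is not ``recorded in the problem formulation'' in a way you can cite --- it is part of what must be proved here; the paper derives it from Proposition~\ref{thm:slope} away from stagnation points, from \cite[Theorem 5.2]{Varvaruca09} near them, and from the boundary Harnack principle applied to $\psi_y$ for the continuity of $\nabla\psi$ at the crests.

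The most serious gap is the threshold $r_\star$. Your proposed mechanism --- that for $r$ near $d_0$ the forced amplitude $\max\eta-\dmin$ would ``exceed the maximal amplitude of a solitary wave homoclinic to the supercritical stream'' --- is not an argument, because no such maximal-amplitude bound is established (indeed \eqref{boundeta} says every non-trivial solution, solitary waves included, has $\max\eta>\dplus$, so there is no contradiction from amplitude alone), and you acknowledge the missing estimate yourself. The paper does not attempt this: it defines $r_\star$ through the known Froude-number bound $F<2$ for solitary waves with nonnegative vorticity (Wheeler \cite{Wheeler15b}), together with the fact that no solitary waves exist for $r\geq d_0$; for $r>r_\star$ the solitary alternatives (II$'$), (III$'$) are excluded outright and only (I$'$) remains. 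Similarly, the statement that only (II$'$) occurs for $r$ near $R_c$ is not something the sketch of ``small and long bifurcating waves'' delivers; the paper cites the analysis of \cite{Kozlov2017a} for the termination of the branch in a solitary wave in that regime. As it stands, your proof of the final paragraph of the corollary is missing its essential input.
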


Note that the constant $r_\star$ from Corollary \ref{cor:vort} is explicit in the following sense. It corresponds to the bound for the Froude number for solitary waves with vorticity. Indeed, we need to guarantee that no solitary waves exist for $r > r_\star$ (or for $F>F_\star$). For positive vorticity as in Corollary \ref{cor:vort} the known bound is $F<2$ (see \cite{Wheeler15b}), which corresponds to $r_\star$ with $F_\star = 2$. Thus, we would find that $r_\star < d_0$ (strictly less) for small and moderate positive vorticity functions. For large positive vorticity it is only known that no smooth solitary waves exist for $r \geq r_0$ (in this case $r_\star = d_0$). It is not known if there exists a family of small-amplitude solitary waves converging to a uniform stream with surface stagnation (or that the "highest" solitary wave coincides with the uniform stream with surface stagnation). Having that in mind we can not always guarantee that $r_\star$ is strictly less than $d_0$ so that one can eliminate possibilities of (II') and (III').

In the irrotational case, when $R_c = \tfrac32$ and $d_0 = +\infty$ the statement above can be improved as follows.

\begin{corollary}[Existence of highest waves in the irrotational setting] \label{cor:irr} For any $r > \tfrac32$ there exists either
	\begin{itemize}
		\item[(I$^\circ$)] an extreme Stokes wave $(\psi,\eta)$ with $\psi \in C^1(\overline{D_\eta})$, enjoying stagnation at every crest (and nowhere else), where $\eta$ forms a sharp angle of 120 degrees;  furthermore, $\eta$ is locally in $C^{2,\gamma}$ everywhere outside the stagnation points, while it is in $C^1$ on every closed interval that does not contain any stagnation point in it's interior.
		\item[(II$^\circ$)] or a solitary wave solution $(\psi,\eta)$ with $\psi \in C^{2,\gamma}(\overline{D_\eta})$ and $\eta \in C^{2,\gamma}(\R)$;
		\item[(III$^\circ$)] or the highest solitary wave solution $(\psi,\eta)$ with $\psi \in C^{1}(\overline{D_\eta})$, enjoying stagnation at the crest (and nowhere else), where $\eta$ forms a sharp angle of 120 degrees; furthermore, $\eta$ is locally in $C^{2,\gamma}$ everywhere outside the crest, while it is in $C^1$ on every closed interval that does not contain the stagnation point in it's interior.			
	\end{itemize}
	Furthermore, only the first option (I$^\circ$) can occur for $r > 2^{2/3} \approx 1.587$.
\end{corollary}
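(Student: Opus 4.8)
The plan is to obtain Corollary~\ref{cor:irr} as a specialization of the already-established Corollary~\ref{cor:vort} to $\omega\equiv 0$, followed by two refinements: sharpening the geometry at the stagnation points, and making the threshold on $r$ explicit. First I would record that the irrotational case satisfies the hypothesis $\omega\geq 0$ of Corollary~\ref{cor:vort}. Since $\Omega\equiv 0$ here, the stream solutions reduce to $H(p;s)=p/s$ with $d(s)=1/s$ and $R(s)=\tfrac12 s^2+1/s$; hence $s_0=0$, $d_0=+\infty$, $s_c=1$ and $R_c=R(1)=\tfrac32$. The admissible interval $(R_c,d_0)$ becomes $(\tfrac32,+\infty)$, so Corollary~\ref{cor:vort} applies to every $r>\tfrac32$ and yields one of the alternatives (I$'$), (II$'$), (III$'$); the wave-breaking alternative is already excluded for nonnegative vorticity. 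The regularity assertions carry over verbatim, so only the angle and the threshold require work.

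Second, I would upgrade the opening angle in (I$'$) and (III$'$) from ``$120^\circ$ or $180^\circ$'' to exactly $120^\circ$. Because $\omega\equiv 0$ along the entire bifurcation curve, the limiting extreme wave is irrotational and its stream function is harmonic away from stagnation points. The horizontally flat ($180^\circ$) alternative in Corollary~\ref{cor:vort} corresponds to the wave locally resembling a laminar flow with surface stagnation; no such flow exists at a finite Bernoulli constant when $\omega\equiv 0$, since surface stagnation forces $s\to 0$ and hence $d(s)=1/s\to+\infty$. Equivalently, the classical resolution of the Stokes conjecture (\cite{AmickFraenkelToland82,Plotnikov82}, and in full generality \cite{Varvaruca2011}) guarantees that every stagnation point of an irrotational wave is an isolated genuine Stokes corner of $120^\circ$. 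This converts (I$'$) and (III$'$) into (I$^\circ$) and (III$^\circ$), while (II$'$) is already (II$^\circ$).

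Third --- the quantitative core --- I would show that (II$^\circ$) and (III$^\circ$) cannot occur once $r>2^{2/3}$, so that only (I$^\circ$) survives; this identifies the constant $r_\star$ of Corollary~\ref{cor:vort} in the irrotational case. A solitary wave is a homoclinic connection to a uniform stream at infinity, which is a laminar solution with the same Bernoulli constant $r$ and therefore corresponds to a root of $R(s)=r$; for a wave of elevation (the only type when $\omega\equiv 0$, consistent with \eqref{boundeta}) this is the supercritical root $s_+(r)>s_c=1$, whose Froude number is $F=s_+(r)^{3/2}$. A direct computation gives $R(2^{1/3})=2^{2/3}$ with $F=(2^{1/3})^{3/2}=\sqrt2$ there, and since $R'(s)=s-s^{-2}>0$ for $s>1$ both $s_+(r)$ and $F=s_+(r)^{3/2}$ increase with $r$. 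Hence the classical upper bound $F<\sqrt2$ for all irrotational solitary waves, including the highest one, forces $s_+(r)<2^{1/3}$, i.e. $r<2^{2/3}$; equivalently, for $r>2^{2/3}$ no solitary wave exists and only (I$^\circ$) remains.

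I expect the last step to be the main obstacle. One must correctly identify the far field of the limiting solitary wave as the \emph{supercritical} stream $s_+(r)$ rather than the subcritical stream from which the periodic branch bifurcates (this is essential, since the subcritical far field always has $F<1$ and the bound would then be vacuous), and then import a sharp Froude bound $F<\sqrt2$ covering every irrotational solitary wave up to the extreme one. By contrast, the angle dichotomy reduces to a citation once irrotationality of the limit is secured, and the reduction through Corollary~\ref{cor:vort} is routine.
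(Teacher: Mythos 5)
Your argument is correct and essentially reproduces the paper's proof: specialize Corollary \ref{cor:vort} to $\omega\equiv 0$, invoke the resolution of the Stokes conjecture (\cite{Varvaruca2011}) to rule out the flat $180^\circ$ stagnation, and use the classical Froude-number bound for irrotational solitary waves (\cite{starr}) to exclude (II$^\circ$) and (III$^\circ$) when $r>2^{2/3}$. Your explicit computation $R(s)=\tfrac12 s^2+s^{-1}$, $F=s_+(r)^{3/2}$, turning the bound $F<\sqrt2$ (equivalently $c^2<2gd$, i.e.\ the paper's ``$F<2$'' read as a bound on $c^2/(gd)$) into $r<2^{2/3}$, is precisely the equivalence the paper asserts without detail.
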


Note that the alternative of a flat stagnation as in Corollary \ref{vort} can not occur for irrotational waves and more general for waves with negative vorticity (around the surface); see \cite{Varvaruca09}. \\

\begin{figure}[t!] \label{fig:diag}
	\centering
	\includegraphics[scale=1]{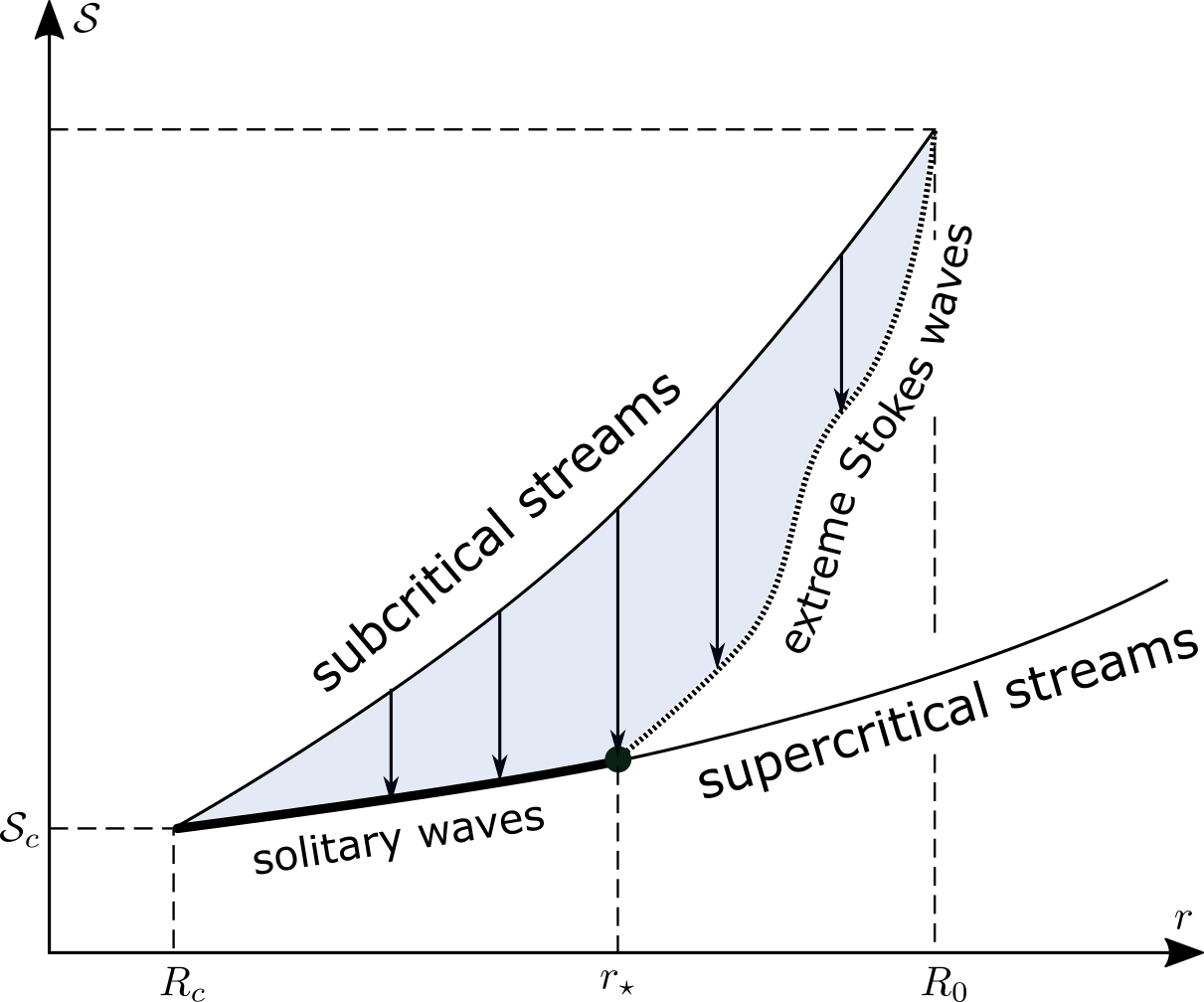}
	\caption{A bifurcation diagram in $(r,\FF)$ coordinates; bifurcation curves correspond to vertical lines with arrows.}
\end{figure}

There is a convenient way to visualize and discuss our results in the $(r,\FF)$-plane, where $\FF$ is the flow force constant. For this purpose we refer to Figure 2, where we sketched a cuspidal region from the Benjamin and Lighthill conjecture; see \cite{Benjamin95}. The cusp in the figure consists of two curves representing parallel flows. Subcritical flows on the upper curve give rise to bifurcations of small-amplitude Stokes waves, while the lower curve represents supercritical streams, supporting solitary waves; see \cite{KozLokhWheeler2020}. It was recently proved in \cite{Lokharu2020b} that all steady waves with vorticity correspond to points inside the cuspidal region (the Benjamin and Lighthill conjecture). It has long been known that not all points from the cusp represent steady waves; Benjamin and Lighhill conjectured that there must a third barrier (similar to the dashed line in the figure) corresponding to the highest waves. This was partly verified by Cokelet \cite{Cokelet1977a}, who found that such barrier exists but consists of waves that are close to stagnation, not necessarily the highest waves. Nevertheless it is believed that there must be a curve of highest waves that starts from the highest solitary wave, constructed in the irrotational setting by Amick and Toland in \cite{AmickToland81b}. 

It follows from our Theorem \ref{thm:vort} (and from Corollary \ref{cor:vort}) that for a given $r > R_c$ there is a bifurcation curve of Stokes waves that starts from a subcritical uniform stream, corresponding to a point on the upper boundary of the cusp in Figure 2. The whole bifurcation curve consists of solutions having the same Bernoulli constant $r$ and therefore corresponding to a horizontal interval (arrows) in the diagram. When $r$ is close to $r_c$ the bifurcation curve necessarily reaches a solitary wave (represented by a point on the solid curve on the bottom boundary of the cusp) as shown in \cite{Kozlov2017a}. For all large values of $r$ (when $r > r_\star$), as follows from Corollary \ref{cor:vort}, we always reach an extreme Stokes wave, corresponding to the dashed curve in the diagram. Thus, we can conjecture that the minimum value of $r_\star$ with this property shall correspond to the highest solitary wave (the solid circle in Figure 2), which we expect to exist for all nonnegative vorticity functions.

Beside the existence of highest waves we present a new result about uniform regularity of arbitrary Stokes waves (including extreme waves).

\begin{theorem}[Regularity]\label{thm:reg} For any $r > \tfrac32$ there exist constants $\alpha \in (0,\tfrac12)$ and $C_1>0$ such that any Stokes wave solution to \eqref{sys:stream} with $\omega = 0$ is subject to
	\begin{equation}\label{psi:reg}
	\|\psi\|_{C^{1,\alpha}(\overline{D_\eta})} \leq C_1,
	\end{equation}
	where both $C_1$ and $\alpha$ depend only on $r$. Furthermore, there is a constant $C_2> 0$ such that
	\begin{equation}\label{psi:rminy}
	C_2(r-y) \leq \psi^2_y(x,y) \leq 2 (r-y)
	\end{equation}
	for all $x,y \in D_\eta$, while $C_2$ depends only on $r$.
\end{theorem}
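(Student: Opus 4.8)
The plan is to prove the two inequalities in \eqref{psi:rminy} separately and then deduce the $C^{1,\alpha}$ estimate \eqref{psi:reg} from the two-sided bound. For the upper bound I would work with the total head $P = \tfrac12|\nabla\psi|^2 + y$. Since $\omega = 0$, $\psi$ is harmonic and hence so are $\psi_x,\psi_y$; therefore $\Delta P = |\nabla\psi_x|^2 + |\nabla\psi_y|^2 \ge 0$, i.e. $P$ is subharmonic. On the free surface $P = r$ by \eqref{sys:stream:bern}. On the flat bottom $\psi\equiv 0$ forces $\psi_x = \psi_{xx} = 0$, whence $\psi_{yy} = -\psi_{xx} = 0$ and $P_y = \psi_x\psi_{xy} + \psi_y\psi_{yy} + 1 = 1 > 0$; thus Hopf's lemma excludes a boundary maximum of $P$ on $y=0$. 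Consequently $\max_{\overline{D_\eta}}P = r$, which yields $\psi_y^2 \le |\nabla\psi|^2 = 2(P-y) \le 2(r-y)$, the right-hand inequality.

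The left-hand inequality is the heart of the matter, and it \emph{cannot} be reached by a naive maximum principle: $\psi_y^2$ is subharmonic, and for the ratio $F = \psi_y^2/(r-y)$ one computes $\Delta F - \tfrac{2}{r-y}\partial_y F = \tfrac{2(\psi_{xy}^2 + \psi_{yy}^2)}{r-y} \ge 0$, so $F$ is a subsolution and the maximum principle only reproduces the upper bound. I would therefore pass to the hodograph variables of \eqref{sys:h} and study $G = \hat h_p^2(r-\hat h) = (r-y)/\psi_y^2 = 1/F$, for which the desired bound is $G \le 1/C_2$. Using the height equation \eqref{sys:h:main} I would derive an elliptic inequality for $G$ on the strip and verify that $G$ obeys a maximum principle, so that $\sup_S G$ is attained on the boundary. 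On $p=1$ the Bernoulli relation \eqref{sys:h:top} gives $G = \tfrac12(1+\eta_q^2) = \tfrac12\sec^2\theta$, where $\theta$ is the surface inclination; on $p=0$ one has $\hat h_q = 0$ and $G = r\,\psi_y(\cdot,0)^{-2}$. Thus the lower bound reduces to (i) a uniform bound $|\theta| \le \theta_0 < \tfrac\pi2$ on the free surface and (ii) a uniform lower bound on the bottom speed $\psi_y(\cdot,0)$, with then $C_2 = \min\{2\cos^2\theta_0,\ \inf_x \psi_y(x,0)^2/r\}$.

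For (ii) I would use the flux normalization $\int_0^\eta \psi_y\,dy = 1$ together with the already-proven upper bound to locate on each vertical line a point where $\psi_y$ is bounded below, and then propagate this to the bottom by Harnack and boundary-Harnack inequalities for the positive harmonic function $\psi_y$. For (i) I would use that the flow angle $\theta$ is harmonic in the potential plane with $\theta = 0$ on the bottom, so its extrema lie on the surface, where the graph property ($\psi_y > 0$, no overturning) gives $|\theta| < \tfrac\pi2$ pointwise. The essential difficulty, and the step I expect to be the main obstacle, is \emph{uniformity}: the constants must depend on $r$ alone and survive the degenerate (extreme) limit, where $\theta \to \tfrac\pi6$ at the crest and the crude slope bound may degenerate to $\tfrac\pi2$ in the limit. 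I would resolve this by a compactness--blow-up argument: fixing $r$ bounds the gradient ($|\nabla\psi|^2 \le 2r$) and confines the mean depth to a compact interval, so any sequence of Stokes waves along which $\theta_0 \to \tfrac\pi2$ (equivalently $C_2 \to 0$) can be rescaled near the offending point; the limit is a global solution in a half-plane, which by the Varvaruca--Weiss classification of blow-ups at stagnation must be either a uniform shear or an isolated $120^\circ$ Stokes corner. For both of these the scale-invariant ratio $\psi_y^2/(r-y)$ is bounded below by an explicit positive constant, contradicting $C_2 \to 0$.

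Finally, the estimate \eqref{psi:reg} follows from the two-sided bound \eqref{psi:rminy}. Away from stagnation the relation $\psi_y^2 \asymp r-y$ renders the free-boundary problem uniformly non-degenerate, so elliptic (Schauder) estimates up to the boundary furnish uniform interior and lateral $C^{1,\alpha}$ control; near a crest the non-degeneracy $|\nabla\psi|^2 \asymp r-y \asymp \mathrm{dist}$ is exactly what yields Hölder continuity of $\nabla\psi$ with the exponent dictated by the square-root behaviour at the $120^\circ$ corner, forcing $\alpha < \tfrac12$. A covering argument with radii chosen according to the distance to the stagnation level $y = r$ then patches these local estimates into the global bound \eqref{psi:reg}, with $\alpha$ and $C_1$ depending only on $r$. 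Everything here is either a maximum-principle computation or standard elliptic regularity; the single hardest ingredient remains the uniform lower bound of the preceding paragraph.
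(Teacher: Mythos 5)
Your upper bound in \eqref{psi:rminy} and the overall architecture (two-sided bound first, then a covering argument with scales tied to $r-y$ for \eqref{psi:reg}) coincide with the paper's. The genuine gap is in the lower bound, which you rightly identify as the heart of the matter. You correctly observe that $\psi_y^2/(r-y)$ is only a subsolution, but your replacement --- passing to hodograph variables and hoping that $G=(r-\hat h)\hat h_p^2$ ``obeys a maximum principle'' --- is asserted, not verified, and there is no reason to expect it: inverting a subsolution of an operator with no zeroth-order term does not produce a supersolution, and the quasilinear height equation does not obviously repair this. The paper's actual device is the flow force function $F$ with $F_y=\tfrac12(\psi_y^2-\psi_x^2)+r-y$: since $F_y$ and $r-y$ are both harmonic, the quotient $f=F_y/(r-y)$ satisfies an exact elliptic equation with no zeroth-order term, so the \emph{minimum} principle applies directly; the crucial point is the presence of $-\psi_x^2$, which your candidate quantity lacks. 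The lower bound $\tfrac12(\psi_y^2-\psi_x^2)\geq \epsilon(r-y)$ then trivially implies $\psi_y^2\geq 2\epsilon(r-y)$.

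Two further steps of your plan are shakier than the paper's. For the surface value of your $G$ you need a uniform slope bound $|\eta'|\le$ const $<\infty$; the paper simply invokes the known bound $|\eta'|\le 1/2$ for irrotational Stokes waves (Amick), whereas your compactness--blow-up argument via the Varvaruca--Weiss classification is problematic: that classification governs blow-ups at stagnation points ($\eta=r$), not at arbitrary points where the slope degenerates, and the compactness you invoke presupposes exactly the uniform regularity being proved. For the bottom bound, the paper's Proposition 5.1 compares with the laminar stream of depth $r$ via the height function and Hopf's lemma, giving $\psi_y(x,0)>1/r$ cleanly; your flux-plus-Harnack-chain argument has uniformity issues because the point on a vertical line where $\psi_y\geq 1/r$ may sit near a steep free surface, where Harnack constants are not controlled without a slope bound --- again a circularity. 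Finally, for \eqref{psi:reg} near the free surface the paper needs a specific anisotropic rescaling of the height function and Lieberman's intermediate Schauder theory for the Bernoulli (oblique) boundary condition; ``standard elliptic regularity'' does not apply as stated because $h_p$ blows up near stagnation.
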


This regularity result is closely related to the Harnack principle for solutions of the Poisson problem (with the right-hand side), recently studied in \cite{Allen2019}. Unfortunately, our result does not follow from \cite{Allen2019} since their theorem is only valid 
in domains with small Lipschitz constants, which might not be the case here. Thus, we present an alternative approach, based on inequalities \eqref{psi:rminy}. As we will show below, \eqref{psi:rminy} holds for every solution (not necessarily a Stokes wave) for which $M:=\sup_{\R}|\eta'|$ is bounded by a constant strictly less than $1$, while the resulting constant $C_2$ in \eqref{psi:rminy} depends on the difference $1-M$.

\section{Preliminaries} \label{s:introduction}

\subsection{Bounds for the velocity field}

The next proposition guarantees that flows with a fixed Bernoulli constant $r$ are away from stagnation at the bottom.

\begin{proposition}\label{p:psiy:bot} Let $r \in (R_c,d_0)$ and $(\psi,\eta) \in C^{2,\gamma}(\overline{D_\eta}) \times C^{2,\gamma}(\R)$ is a solution to \eqref{sys:stream}. Then 
\begin{equation} \label{eq:psiy:bot}
	U_y(0;\max\eta) \leq \psi_y(x,0) \leq U_y(0;\min\eta),
\end{equation}
where $(U(y;d),d)$ is the stream solution of depth $d$.
\end{proposition}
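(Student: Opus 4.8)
The plan is to recast the claimed inequalities as a comparison between the height function of the wave and two suitable stream solutions, and then to extract the bottom estimate from a maximum principle. Recall from \eqref{height:stream} that $\psi_y = 1/\hat h_p$, and that for the stream solution $H(\cdot;s)$ one has $H_p(0;s) = 1/s$, so that $U_y(0;d) = s$ where $d(s) = d$. Since $d(\cdot)$ is a strictly decreasing bijection of $(s_0,\infty)$ onto $(0,d_0)$, and since the surface condition \eqref{sys:h:top} forces $\eta(q) = \hat h(q,1) = r - \tfrac{1+\hat h_q^2}{2\hat h_p^2} < r < d_0$, there exist unique parameters $\hat s, \check s \in (s_0,\infty)$ with $d(\hat s) = \max\eta$ and $d(\check s) = \min\eta$; because $\min\eta \le \max\eta$ we have $\hat s \le \check s$. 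Writing $U_y(0;\max\eta) = \hat s$ and $U_y(0;\min\eta) = \check s$, the assertion \eqref{eq:psiy:bot} is equivalent to $H_p(0;\check s) \le \hat h_p(q,0) \le H_p(0;\hat s)$ for all $q$.

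The key structural observation is that both $\hat h$ and the stream solutions $H(\cdot;\hat s), H(\cdot;\check s)$ satisfy the interior equation \eqref{sys:h:main}, in which the vorticity enters only through $\Omega$, a function of $p$ alone. Consequently, upon forming a difference $w = \hat h - H(\cdot;s)$ and writing each nonlinear term as the integral of its differential along the segment joining $\nabla H$ to $\nabla\hat h$, the $\Omega$-contribution cancels and $w$ solves a linear, homogeneous equation in divergence form
\[
\partial_p\!\big(b_1 w_q + b_2 w_p\big) - \partial_q\!\big(c_1 w_q + c_2 w_p\big) = 0 \qquad\text{in } S,
\]
whose coefficient matrix is symmetric and, since the principal symbol of \eqref{sys:h:main} equals $-\tfrac{1}{\hat h_p}\big[(\xi_q - \tfrac{\hat h_q}{\hat h_p}\xi_p)^2 + \tfrac{1}{\hat h_p^2}\xi_p^2\big]$, uniformly elliptic as long as $\hat h_p$ is bounded above and below and $\hat h_q$ is bounded. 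The decisive point is that \emph{no zeroth-order term in $w$} survives, so the maximum principle applies without any sign condition.

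It remains to compare boundary values. On the bottom $p=0$ both functions vanish by \eqref{sys:h:bot}, so $w = 0$ there. On the top $p=1$ we have $\hat h(q,1) = \eta(q)$, hence for $s = \hat s$ the data are $w = \eta - \max\eta \le 0$, while for $s = \check s$ they are $w = \eta - \min\eta \ge 0$. The maximum principle then yields $\hat h - H(\cdot;\hat s) \le 0$ and $\hat h - H(\cdot;\check s) \ge 0$ throughout $S$. Finally, using $w(q,0) = 0$, the one-sided difference quotient $w(q,p)/p$ as $p \to 0^+$ gives $w_p(q,0) \le 0$ in the first case and $w_p(q,0) \ge 0$ in the second, that is $H_p(0;\check s) \le \hat h_p(q,0) \le H_p(0;\hat s)$. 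Passing back through $\psi_y = 1/\hat h_p$ reverses the inequalities and produces exactly \eqref{eq:psiy:bot}.

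The main obstacle is the application of the maximum principle on the unbounded strip $S$. For Stokes (periodic) waves this is harmless: the problem reduces to a bounded period cell $[-\pi,\pi]\times[0,1]$ with periodicity in $q$, on which $C^{2,\gamma}$ regularity together with $\psi_y>0$ makes the coefficients bounded and the operator uniformly elliptic, so the classical maximum principle applies verbatim. For solutions that are merely bounded on $S$ one instead invokes a Phragm\'en--Lindel\"of argument, which requires a priori uniform bounds $0 < \delta \le \psi_y \le \delta^{-1}$ and $\sup|\eta'| < \infty$ to keep the coefficients uniformly elliptic and bounded as $q \to \pm\infty$; securing these uniform bounds (rather than the pointwise computation, which is routine) is where the real care is needed.
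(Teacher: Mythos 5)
Your argument is essentially the paper's own proof: both form the difference $w=\hat h-H(\cdot;s)$ between the height function and the stream solutions of depths $\max\eta$ and $\min\eta$, observe that $w$ satisfies a homogeneous elliptic equation with signed data on $p=1$ and zero data on $p=0$, and read off the sign of $w_p(q,0)$. The only cosmetic difference is that you use the one-sided difference quotient at $p=0$ where the paper invokes the Hopf lemma (either suffices for the non-strict inequality), and you spell out the reduction to a period cell, which the paper leaves implicit.
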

\begin{proof}
	This statement can be proved in terms of height functions introduced in Section 2.3. Let $\hat{h}(q,p)$ be the height function corresponding to $(\psi,\eta)$ and $\hat{H}(p)$ is the one for $U(y;d)$ with $d = \hat{\eta}$. Note that we can find such stream solution $U(y;d)$ because of the assumption $r < d_0$. Now we consider the difference $w = \hat{h}-\hat{H}$, which solves a homogeneous elliptic problem in $S$, zero on the bottom and non-positive along the upper boundary $p=1$. Thus, $w \leq 0$ in $S$ and then $w_p(q,0) \leq 0$ by the Hopf lemma. This gives the lower bound in \eqref{eq:psiy:bot}. The remaining inequality can be proved in the same way.
\end{proof}

Note  that $\max \eta \leq r$ by the Bernoulli equation \eqref{sys:stream:bern} and then $U_y(0;\hat{\eta}) \geq U_y(0;r) > 0$, which shows that $\psi_y$ at the bottom is separated from zero by a constant depending only on $r$.

When a solution is locally away from stagnation, one can prove the following regularity property, valid even for highest waves but outside the stagnation points.

\begin{proposition} \label{p:delta} Assume that $\omega \in C^{1,\gamma}([0,1])$. Let $r \in (R_c,d_0)$ and $\delta>0$ be given as well as a ball $B$ of radius $\rho > 0$ and let $M = \sup_{(x,\eta(x)) \in B} |\eta'(x)|$. Then there exist constants $\alpha \in (0,1)$ and $C>0$, depending only on $r,\delta,\rho$ and $M$ such that any solution $(\psi,\eta) \in C^{2,\gamma}(\overline{D_\eta}) \times C^{2,\gamma}(\R)$ with $\inf_{B} \psi_y \geq \delta $ satisfies $\|\psi\|_{C^{3,\alpha}(D_\eta \cap \tfrac12 B)} \leq C$, where $\tfrac12 B$ is a ball with the same centre and radius $\tfrac12 \rho$.
\end{proposition}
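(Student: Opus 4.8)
The plan is to pass to the partial hodograph variables of Section~2.1 on the set $B\cap D_\eta$, run a De Giorgi--Nash--Moser and Schauder bootstrap for the height function $\hat{h}$ there, and then transfer the resulting estimate back to $\psi$. The point of working with $\hat{h}$ rather than with $\psi$ directly is that the free surface $\{y=\eta(x)\}$ becomes the fixed line $p=1$, so the argument never differentiates $\eta$; the only information about $\eta$ that enters is the bound $|\hat{h}_q|=|\eta'|\le M$ on $p=1$ (recall $\eta(q)=\hat{h}(q,1)$). This is a $C^{0}$ bound, perfectly compatible with seeking $C^{1,\beta}$ control of $\hat{h}$, and it is exactly what forces the final constant to depend on $M$ alone and not on higher norms of $\eta$.

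First I would record the two-sided bound $\delta\le\psi_y\le C$ on $B$, where $C$ depends only on $r$ (and the fixed $\omega$): the lower bound is the hypothesis, while the upper bound follows from the a priori gradient bound for steady waves, since on the surface $\tfrac12|\nabla\psi|^2=r-\eta\le r$ by \eqref{sys:stream:bern}, and the interior bound is classical (maximum principle for $\tfrac12|\nabla\psi|^2+y$ when $\omega=0$, with the bottom value controlled by Proposition \ref{p:psiy:bot}, and an analogous estimate in general). Hence on $B\cap D_\eta$ one has $\hat{h}_p=1/\psi_y\in[c_0,1/\delta]$ with $c_0=c_0(r)>0$ and $|\hat{h}_q|=|\psi_x|/\psi_y\le C/\delta$, so the divergence-form equation \eqref{sys:h:main} is uniformly elliptic on the image $B^{\ast}$ of $B\cap D_\eta$, with ellipticity constants controlled by $r$ and $\delta$. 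The boundary condition \eqref{sys:h:top} on $p=1$ is best read as a conormal (Neumann-type) condition: the $p$-flux of \eqref{sys:h:main} equals $r-\hat{h}+\Omega(1)$ there, i.e.\ the prescribed flux is a smooth function of $\hat{h}$ of zeroth order; on $p=0$ the condition $\hat{h}=0$ is Dirichlet.

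Next I would obtain a uniform $C^{1,\beta}$ estimate for $\hat{h}$ on $B^{\ast}$. In the interior and up to the Dirichlet boundary this is the gradient-H\"older estimate of Ladyzhenskaya--Uraltseva for quasilinear divergence-form equations; up to $p=1$ it is the corresponding conormal boundary estimate, for which the zeroth-order flux data $r-\hat{h}+\Omega(1)$ and the $C^0$ bound $|\hat{h}_q|\le M$ suffice. With $\hat{h}\in C^{1,\beta}$ in hand, the coefficients of \eqref{sys:h:main}, being smooth functions of $(\hat{h}_q,\hat{h}_p)$ on the range where $\hat{h}_p\ge c_0$, lie in $C^{0,\beta}$, so the equation is linear elliptic with $C^{0,\beta}$ coefficients and Schauder theory (interior, conormal at $p=1$, Dirichlet at $p=0$) upgrades $\hat{h}$ to $C^{2,\beta}$. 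Since $\omega\in C^{1,\gamma}$ gives $\Omega\in C^{2,\gamma}$, one further Schauder step with $\alpha=\min(\beta,\gamma)$ yields a uniform $C^{3,\alpha}$ bound. All constants depend only on $r,\delta$ (ellipticity), $M$ (the boundary data), and $\rho$: the last enters through the localization from $B^{\ast}$ to the image of $\tfrac12 B$, which is separated from the artificial lateral boundary $\partial B\cap D_\eta$ by a distance $\gtrsim\rho\,c(r,\delta)$ because the hodograph map is bi-Lipschitz with constants depending on $r,\delta$.

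Finally I would transform back. The map $(q,p)\mapsto(q,\hat{h}(q,p))$ is a $C^{3,\alpha}$ diffeomorphism whose Jacobian $\hat{h}_p$ is bounded away from $0$ and $\infty$, so its inverse $(x,y)\mapsto(x,\psi(x,y))$ is $C^{3,\alpha}$ with uniformly controlled norm, and $\psi_y=1/\hat{h}_p$, $\psi_x=-\hat{h}_q/\hat{h}_p$ deliver $\|\psi\|_{C^{3,\alpha}(D_\eta\cap\tfrac12 B)}\le C(r,\delta,\rho,M)$. I expect the main obstacle to be the boundary step at $p=1$: producing the $C^{1,\beta}$-up-to-the-boundary estimate for the conormal problem with constants governed only by the $C^0$ data (hence by $M$), together with the verification of uniform ellipticity, which rests on the a priori upper bound for $|\nabla\psi|$. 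The interior and Dirichlet parts and the subsequent Schauder bootstraps are then routine.
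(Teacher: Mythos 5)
The paper does not actually prove this proposition; it only remarks that ``this statement can be proved just as \cite[Proposition~4.2]{Kozlov2017a}'', and the argument there is precisely the one you outline: pass to the Dubreil-Jacotin variables so that the free boundary becomes the fixed line $p=1$ with a uniformly oblique (conormal) condition, get a $C^{1,\beta}$ bound from the quasilinear gradient-H\"older theory (the boundary case being Lieberman's estimate, which the paper itself invokes elsewhere, in Lemma~\ref{l:reg:boundary}), and then bootstrap with Schauder up to the regularity permitted by $\omega\in C^{1,\gamma}$. So your proposal is correct and follows essentially the same route as the proof the authors have in mind. The only place I would ask you to be slightly more careful is the uniform upper bound on $|\nabla\psi|$ over $B\cap D_\eta$ needed for ellipticity of the height equation: the function $\tfrac12|\nabla\psi|^2+y$ is subharmonic only when $\omega=0$, and in the rotational case one has to use the modified comparison functions of the type appearing in the proof of Proposition~\ref{thm:slope} (or cite the known gradient bound for unidirectional flows with fixed $r$), rather than a bare maximum principle.
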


This statement can be proved just as \cite[Proposition 4.2]{Kozlov2017a}. Note that when $\inf \psi_y > 0$ the regularity of $\psi$ is only limited by the regularity of the vorticity $\omega$. 

\subsection{Estimates at the boundary}

The next technical result is essential in establishing properties of highest waves as in Corollary \ref{cor:vort} and for proving claims of Theorem \ref{thm:vort}.

\begin{proposition} \label{thm:slope} For any $\omega \in C^{1,\gamma}([0,1])$ and any $r > R_c$ there exists a constant $C_1 > 0$ such that any Stokes wave solution $(\psi,\eta) \in C^{2,\gamma}(\overline{D_\eta}) \times  C^{2,\gamma}(\R)$ of \eqref{sys:stream} is subject to 
	\begin{equation} \label{thm:slope:bounds1}
		\psi_y(x_2,\eta(x_2)) - \psi_y(x_1,\eta(x_1)) \leq C_1 (\eta(x_2)-\eta(x_1))
	\end{equation}
	for all $x_1 < x_2$ such that $\eta' \geq 0$ on $(x_1,x_2)$. If additionally $\omega \geq 0$ on $[0,1]$, then there exists $C_2>0$ such that
\begin{equation} \label{thm:slope:bounds2}
	\psi_y(x,\eta(x)) \geq \begin{cases}
	& C_2 (r-\eta(x)), \ \ r-\max\eta(x) \leq \tfrac12, \\
	& 1, \ \ r-\max\eta(x) > \tfrac12 
	\end{cases} 
\end{equation}
for all $x \in \R$; furthermore,
\begin{equation} \label{thm:slope:bounds3}
 |\eta'|^2 \leq C_2 (r-\eta)^{-1}, \ \ x \in \R.
\end{equation}
Constants $C_1$ and $C_2$ depend only on $r$ and $\omega$. 
\end{proposition}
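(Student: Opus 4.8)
The plan is to read off everything from the surface identity obtained by combining the two boundary conditions on the free surface. Since $\psi \equiv 1$ on $y = \eta(x)$, differentiating in $x$ gives $\psi_x = -\psi_y\eta'$ there, and substituting into the Bernoulli condition \eqref{sys:stream:bern} yields
\begin{equation*}
\psi_y^2\,(1 + \eta'^2) = 2(r-\eta) \qquad \text{on } y = \eta.
\end{equation*}
This already contains the universal upper bound $\psi_y^2 \le 2(r-\eta)$. More importantly, it shows that the lower velocity bound \eqref{thm:slope:bounds2} and the slope bound \eqref{thm:slope:bounds3} are two readings of one estimate: solving the identity for $\eta'^2$ and for $\psi_y^2$, a bound of the form $|\eta'|^2(r-\eta) \le C_2$ is equivalent to $\psi_y \gtrsim (r-\eta)$ in the near-extreme regime and to $\psi_y\ge1$ where $r-\eta$ is bounded below. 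Thus the second half of the proposition reduces to a single degenerate slope estimate, and the case split in \eqref{thm:slope:bounds2} merely separates the near-stagnation regime $r-\max\eta\le\tfrac12$ from the regime where $r-\eta$ is uniformly bounded below.

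For \eqref{thm:slope:bounds1} I would argue by the maximum principle. Differentiating $\Delta\psi+\omega(\psi)=0$ shows that both $\psi_x$ and $\psi_y$ solve the linearized equation $\Delta w + \omega'(\psi)\,w = 0$, with $\psi_y>0$ by \eqref{uni:psi}. Since the sign of $\omega'$ is unknown, the key is to remove the zeroth-order term by dividing by the positive solution $\psi_y$: for any solution $w$ the quotient $v=w/\psi_y$ satisfies the divergence-form equation $\operatorname{div}(\psi_y^2\,\nabla v)=0$, to which the maximum principle and the Hopf lemma apply with no restriction on the vorticity. Working on the sub-domain bounded by a monotone arc of the surface and the vertical symmetry lines through the neighbouring crest and trough (where $\psi_x=0$), and using that on the surface $\psi_x/\psi_y=-\eta'$ while $\psi_y^2(1+\eta'^2)=2(r-\eta)$, one controls the boundary trace of $\psi_y$ by the height and transfers this to the interior. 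The one-sided Lipschitz bound \eqref{thm:slope:bounds1} follows, with a constant depending only on $r$ and $\omega$ through Propositions \ref{p:psiy:bot} and \ref{p:delta}.

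For the estimates under $\omega\ge0$ the sign condition enters through superharmonicity, $\Delta\psi = -\omega(\psi)\le0$, and the whole content is the degenerate slope bound $|\eta'|^2(r-\eta)\le C_2$. Away from crests, where $r-\eta$ is bounded below by a constant depending on $r$, Propositions \ref{p:psiy:bot} and \ref{p:delta} give uniform regularity and hence a uniform slope bound; since in the regime $r-\max\eta>\tfrac12$ the whole family is bounded away from stagnation, this together with the surface identity yields $\psi_y\ge1$ (the second line of \eqref{thm:slope:bounds2}). Near a crest I would localize and rescale about a near-stagnation point of height close to $r$: under the natural scaling the vorticity term becomes lower order (as $\psi\to1$ and $\omega$ is bounded), so the rescaled problem approaches the homogeneous corner problem, whose geometry keeps the slope below the Stokes value; superharmonicity excludes the overturning/flat alternative and keeps $\eta'$ of one sign on each monotone arc. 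Reinserting this local slope bound into the surface identity gives the first line of \eqref{thm:slope:bounds2} and the bound \eqref{thm:slope:bounds3}.

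The main obstacle I anticipate is exactly this uniform degenerate slope estimate near crests: one must control $|\eta'|$ by $(r-\eta)^{-1/2}$ uniformly over the entire family of Stokes waves, including those arbitrarily close to stagnation, where the elliptic problem degenerates at the surface. This is where the assumption $\omega\ge0$ is genuinely used and where a careful barrier or blow-up argument, rather than a direct maximum-principle comparison, seems unavoidable; once it is in hand, the remaining passages between $|\eta'|$, $\psi_y$, and $(r-\eta)$ are bookkeeping through the surface identity.
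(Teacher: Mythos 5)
Your preliminary reductions are sound: the surface identity $\psi_y^2(1+\eta'^2)=2(r-\eta)$ is exactly how the paper passes between \eqref{thm:slope:bounds2} and \eqref{thm:slope:bounds3}, and you correctly isolate the uniform degenerate estimate near crests as the heart of the matter. But the proposal does not actually prove either key inequality, and you yourself flag the main obstacle as unresolved. For \eqref{thm:slope:bounds1} you never exhibit a concrete comparison function: ``one controls the boundary trace of $\psi_y$ by the height and transfers this to the interior'' is a restatement of the goal, and the quotient $v=\psi_x/\psi_y$ equals $-\eta'$ on the surface, which does not obviously encode the one-sided Lipschitz bound for $\psi_y$ along the surface. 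Moreover, routing the constants through Proposition \ref{p:delta} cannot give uniformity, since that proposition's constants depend on a stagnation gap $\delta$ and on $\sup|\eta'|$, neither of which is controlled for waves near stagnation --- precisely the regime the proposition must cover.

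The missing idea is that a direct maximum-principle comparison does work --- the very route you dismiss as ``unavoidable'' to avoid. The paper's proof introduces $f=\mp k\psi+\Omega(\psi)+\tfrac12|\nabla\psi|^2+y$ and shows that ${\mathcal L}f=\Delta f+af_x+bf_y>0$ for a suitable first-order operator, provided either $k>0$ is large (for \eqref{thm:slope:bounds1}, arbitrary $\omega$) or $k>0$ is small and $\omega\ge0$ (for \eqref{thm:slope:bounds2}). Since $f_y>0$ on the bottom and $f$ is constant on the free surface, the Hopf lemma applied at the surface yields the pointwise differential inequality $\partial_x\bigl(\psi_y(x,\eta(x))\bigr)\pm k\eta'(x)\le0$ on monotone arcs, which integrates from $x$ to the neighbouring crest to give both \eqref{thm:slope:bounds1} and $\psi_y(x,\eta(x))\ge k(\max\eta-\eta(x))+\sqrt{2(r-\max\eta)}$, hence \eqref{thm:slope:bounds2}; then \eqref{thm:slope:bounds3} follows from the surface identity as you indicate. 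Your alternative --- a blow-up to the homogeneous corner problem --- would at best give a qualitative statement for a single limiting wave; making it uniform over the entire family of Stokes waves with Bernoulli constant $r$ would require exactly the kind of a priori bound being proved, so as written the argument is circular and the gap is genuine.
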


This theorem shows that no wave breaking is possible for unidirectional Stokes waves with non-negative vorticity before stagnation occurs at crests.

\begin{proof} First we prove \eqref{thm:slope:bounds2}. For a given solution $(\psi,\eta)$ satisfying assumptions of the theorem we put
	\[
		f(x,y)=-k \psi+\Omega(\psi)+\tfrac{1}{2} \left(\psi_y^2+\psi_x^2\right)+y, \ \ \ (x,y) \in D_\eta,
	\]
	where $k > 0$ is a constant that will be specified later. A straightforward computation shows that
	\begin{equation}\label{thm:slope:eq1}
		{\mathcal L}f:=\Delta f + a f_x + b f_y = \left( 2 + (\omega(\psi)-2 k)(2\psi_y - k |\nabla \psi|^2)  \right) |\nabla \psi|^{-2},
	\end{equation}
	where
	\[
		\begin{split}
		  a  & = -2 (f_x - \psi_y  (\omega(\psi)-2 k))|\nabla \psi|^{-2}, \\
		  b  & = -2 (-2 + f_y - \psi_y  (\omega(\psi)-2 k))|\nabla \psi|^{-2}.
		\end{split}
	\]
	Note that the right-hand side in \eqref{thm:slope:eq1} is positive, provided $\omega \geq 0$ and $k$ is sufficiently small. Indeed, we require that
	\[
		2 \psi_y - k |\nabla \psi|^2 \geq  0, \ \ 2k (2 \psi_y - k |\nabla \psi|^2) \leq 2.
	\]
	The left inequality is justified when $k \leq 2/\sqrt{2r}$ (we used \eqref{sys:stream:bern} here), while for the second it is enough to assume that $k \leq 1/(2\sqrt{2r})$. Thus, if we choose $k$ as above, then the maximum principle shows that $f$ attains its global maximum at the boundary. On the bottom
	\[
	f_y = 1- k \psi_y > 0,
	\]
	provided $k$ is small enough in view of Proposition \ref{p:psiy:bot} (since $\psi_y$ is bounded at the bottom by a constant depending only on $r$ and $\omega$). On the upper boundary $f = k + \Omega(1)+r$ is constant and the Hopf lemma then gives $f_y(x,\eta(x)) \geq 0$ for all $x \in \R$. Computing $f_y$ explicitly and using the inequality for $f_y$, we obtain
	\begin{equation}\label{thm:slope:eq2}
		f_y = \frac{\eta'' \psi_y^2-k \left(\eta'^2+1\right) \psi_y+1}{\eta'^2+1} \geq 0, \ \ y = \eta(x).
	\end{equation}
	Note that
	\[
	\partial_x (\psi_y(x,\eta(x))) = - \eta'(x)\frac{\eta'' \psi_y^2+1}{(\eta'^2+1)\psi_y}
	\]
	and then \eqref{thm:slope:eq2} yields
	\[
	\partial_x (\psi_y(x,\eta(x))) + k \eta'(x) \leq 0
	\]
	everywhere so far $\eta'(x) \geq 0$. Now let $x \in (x_t,x_c)$, where $x_t$ and $x_c$ are coordinates of neighbour trough and crest respectively. Then $\eta'(t) \geq 0$ for all $t \in [x,x_c]$ and after integration we find
	\[
		\psi_y(x,\eta(x)) \geq k (\eta(x_c)-\eta(x)) + \psi_y(x_c,\eta(x_c)) \geq  k (\eta(x_c)-\eta(x))  + \sqrt{2(r-\eta(x_c))}.
	\]
	Now \eqref{thm:slope:bounds2} easily follows. The corresponding relation for $\eta'(x)$ can be obtained directly from the Bernoulli equation.
	
	Our argument for proving \eqref{thm:slope:bounds1} is similar. For some $k > 0$ we consider a function 
	\[
		g(x,y)=k \psi+\Omega(\psi)+\frac{1}{2} \left(\psi_y^2+\psi_x^2\right)+y, \ \ \ (x,y) \in D_\eta.
	\]
	It differs from $f$ in the sign of $k$. Then as before we obtain
	\[
		{\mathcal L}g:=\Delta f + a f_x + b f_y = \left( 2 + (\omega(\psi)+2 k)(2\psi_y + k |\nabla \psi|^2)  \right) |\nabla \psi|^{-2}.
	\]	
	We see that if $k > 0$ is large enough, then $\omega(\psi)+2 k \geq 0$ and ${\mathcal L}g > 0$ in $D_\eta$, while $g_y$ is always positive on $y=0$. Just as before we obtain an equality
	\[
		\partial_x (\psi_y(x,\eta(x))) - k \eta'(x) \leq 0,
	\]
	which after integration gives \eqref{thm:slope:bounds1}.
\end{proof}

\subsection{A lower bound for the wavelength}

Below we will prove that Stokes waves with a fixed Bernoulli constant can not have too small wavelengths.

\begin{theorem} \label{thm:wavelength} Assume that $\omega \in C^{1,\gamma}([0,1])$ and let $r>R_c$ be given.  Then there exists a positive constant $\Lambda_\star$ such that any Stokes wave solution to \eqref{sys:stream} has period greater than $\Lambda_\star$, while $\Lambda_\star$ depends only on $\omega$ and $r$.
\end{theorem}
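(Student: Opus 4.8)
The plan is to extract an intrinsic length scale from the \emph{horizontal velocity} $v:=\psi_x$, which vanishes on the vertical lines through every crest and trough and solves a homogeneous linear elliptic equation; I will then argue that such a function cannot be supported on a domain that is too thin in the $x$-direction. Differentiating \eqref{sys:stream:lap} in $x$ gives $\Delta v+\omega'(\psi)\,v=0$ in $D_\eta$. By the symmetry of a Stokes wave, $v$ is odd about each crest and trough line and hence vanishes there; since $\psi\equiv 0$ on the bottom we also have $v=0$ on $y=0$; and by the monotonicity of the profile between a crest and the adjacent trough, $v$ keeps a fixed sign in the corresponding half-period domain $D^{1/2}$, whose $x$-extent is exactly $\Lambda/2$. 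Thus $v$ is a nontrivial solution vanishing on all of $\partial D^{1/2}$ except on the free surface $y=\eta$.

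First I would record the spectral mechanism. Multiplying the equation by $v$ and integrating over $D^{1/2}$ leaves only a free-surface boundary contribution,
\[
\int_{D^{1/2}}|\nabla v|^2=\int_{D^{1/2}}\omega'(\psi)\,v^2+\int_{y=\eta}v\,\partial_n v\,ds .
\]
On each horizontal slice $v$ vanishes at its left endpoint (a crest line), and below the trough level at both endpoints; the one-sided Poincar\'e inequality on intervals of length at most $\Lambda/2$ then yields $\int_{D^{1/2}}|\nabla v|^2\ge(\pi/\Lambda)^2\int_{D^{1/2}}v^2$. With $K:=\sup_{[0,1]}|\omega'|$ this gives
\[
\Big[(\pi/\Lambda)^2-K\Big]\int_{D^{1/2}}v^2\le\int_{y=\eta}v\,\partial_n v\,ds ,
\]
so a lower bound for $\Lambda$ follows as soon as the right-hand side is controlled by $C\int_{D^{1/2}}v^2$ with $C$ depending only on $r$ and $\omega$.

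The free-surface term I would compute explicitly from Bernoulli's law. Writing $q_s=|\nabla\psi|=\sqrt{2(r-\eta)}$ on $y=\eta$, using $\psi_x=-\psi_y\eta'$ and $\tfrac12 q_s^2+\eta=r$, and expressing $\partial_n v$ through the surface speed and the curvature, one integrates by parts (the endpoint terms vanish since $\eta'=0$ at crest and trough) and obtains a reduction of the schematic form
\[
\int_{y=\eta}v\,\partial_n v\,ds=2\int_0^{\Lambda/2}\frac{\eta'^2}{1+\eta'^2}\,dx-\int_0^{\Lambda/2}\eta'\arctan\eta'\,dx-\int_0^{\Lambda/2}\frac{q_s\,\omega(1)\,\eta'^2}{\sqrt{1+\eta'^2}}\,dx .
\]
The elementary inequality $t\arctan t\ge t^2/(1+t^2)$ shows the first two integrals together are at most $\int_0^{\Lambda/2}\eta'^2/(1+\eta'^2)\,dx\le\Lambda/2$, so the leading part stays bounded \emph{regardless of the slope} — which is exactly what is needed to accommodate waves close to breaking — while the vorticity integral is lower order and is handled using $q_s\le\sqrt{2r}$ (and, when $\omega\ge0$, the slope bound \eqref{thm:slope:bounds3}).

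The main obstacle is the final conversion of this surface term into the bulk quantity $\int_{D^{1/2}}v^2$. The bounds above show $\int_{y=\eta}v\,\partial_n v\,ds=O(\Lambda)$, but turning this into $\le C\int_{D^{1/2}}v^2$ uniformly is delicate: a crude trace estimate produces a constant that degenerates like $(r-\max\eta)^{-1}$ near a stagnating crest, while the opposite length-type bound is not amplitude-scaled. I would resolve this by splitting the surface into a small neighbourhood of each crest, where $q_s$ is small and the bound $\eta'^2/(1+\eta'^2)\le 1$ is used on a region of controlled size, and the complementary part where $\psi_y$ is bounded below so that a trace inequality with a uniform constant applies; here the finite-depth separation from the bottom (Proposition \ref{p:psiy:bot}), the interior regularity away from stagnation (Proposition \ref{p:delta}) and the velocity and slope estimates of Proposition \ref{thm:slope} provide the required uniformity in $r$ and $\omega$. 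An alternative, which avoids explicit constants, is a contradiction-and-compactness argument: assuming a sequence of Stokes waves with $\Lambda_j\to0$, normalising $v_j=\psi_{j,x}/\|\psi_{j,x}\|_{L^2}$ and passing to a limit, the diverging factor $(\pi/\Lambda_j)^2$ together with the $O(\Lambda_j)$ bound on the surface term forces the normalised limit to vanish, contradicting the normalisation; the nontrivial point there is the uniform regularity needed for compactness, which again is the crux.
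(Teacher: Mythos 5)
Your strategy is genuinely different from the paper's: you attack the problem variationally through $v=\psi_x$ and a Poincar\'e inequality on the half-period domain, whereas the paper argues by contradiction and compactness, using the horizontally integrated Bernoulli function $G$ of \eqref{function:G} to show that a sequence with $\Lambda_j\to0$ converges to a parallel flow with the \emph{same} Bernoulli constant $r$ (hence of depth $d_\pm(r)<r$), then the flow force function $F$ to show the amplitudes must collapse, and finally Proposition \ref{thm:slope} plus the dispersion relation to pin the wavelength of the resulting small-amplitude waves near $2\pi\tau^{-1}>0$. However, your argument as written has a genuine gap at exactly the step you flag as ``the crux'', and neither of your two proposed repairs closes it. The inequality $[(\pi/\Lambda)^2-K]\int_{D^{1/2}}v^2\le\int_{y=\eta}v\,\partial_n v\,ds$ only yields a lower bound on $\Lambda$ if the right-hand side is dominated by $C\int_{D^{1/2}}v^2$ with a \emph{uniform} $C$; but the effective Robin coefficient for $v$ on the free surface (obtained by differentiating the Bernoulli condition) involves $1/\psi_y$ and degenerates like $(r-\max\eta)^{-1}$ at a near-stagnating crest, so no uniform trace constant is available precisely for the waves of large amplitude that the theorem must cover. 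Your alternative bound $\int_{y=\eta}v\,\partial_n v\,ds=O(\Lambda)$ does not substitute for it: for a small-amplitude wave of surface amplitude $a$ and wavenumber $k=2\pi/\Lambda$ one has $\int_{D^{1/2}}v^2\sim a^2$ and $\int_{y=\eta}v\,\partial_n v\,ds\sim a^2k^2$, so both sides of your inequality are of order $a^2k^2$ and the comparison with $C\Lambda$ is vacuous when $a$ is small; the same scaling defeats the normalised compactness variant, since after dividing by $\|\psi_x\|_{L^2}^2$ the surface term is of the same order $(\pi/\Lambda_j)^2$ as the Poincar\'e term rather than lower order. In other words, ruling out short waves of \emph{small} amplitude genuinely requires the dispersion relation (the paper invokes it in its last step), and your energy inequality cannot see it without tracking constants at the level of the Sturm--Liouville problem \eqref{sys:disp:main}.

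A second, independent problem: the vorticity contribution to your surface identity contains a factor of order $q_s\,\eta'^2/\sqrt{1+\eta'^2}\le q_s|\eta'|\le 2(r-\eta)/\psi_y$, which is unbounded as $\psi_y\to0$ unless you have a slope bound. You propose to use \eqref{thm:slope:bounds3}, but that estimate is only proved for $\omega\ge0$, whereas Theorem \ref{thm:wavelength} is stated for arbitrary $\omega\in C^{1,\gamma}$; the paper is explicit that the main difficulty is precisely the absence of any a priori bound on $|\eta'|$. So for general vorticity even your $O(\Lambda)$ estimate on the surface term is not established. The splitting of the surface into crest neighbourhoods and their complement is a reasonable instinct, but as sketched it does not produce a constant independent of $r-\max\eta$, and without that the argument does not conclude.
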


Note that the main difficulty here is that we do not require any uniform bound for the slope $|\eta'|$ (uniform with respect to different solutions), which is a common assumption for many papers. Before giving a proof of the theorem we need to make some preparations. Given a Stokes wave solution $(\psi,\eta)$, even in $x$-variable and having a crest at the vertical line $x=0$, we define a function
\begin{equation} \label{function:G}
	G(x,y) = \int_{0}^x \left\{ \tfrac12 ((\psi_x)^2(x',y)-(\psi_y)^2(x',y))-\Omega(\psi(x',y))+\Omega(1) + r - \eta(x') \right\} \ dx'.
\end{equation}
This function similar to the flow force function (compare with the definition of function $F$ below) but the integration is taken with respect to the horizontal variable instead. The function $G$ is well defined inside the set
\[
D_\eta^0 = \{ (x,y) \in D_\eta: \ 0 \leq x \leq x_t \},
\]
where $x_t > 0$ is the smallest positive coordinate of a trough (function $\eta$ is strictly decreasing on the interval $(0,x_t)$). A straightforward computation gives
\begin{equation}\label{thm:wavelength:grad}
	G_x = \tfrac12 ((\psi_x)^2-(\psi_y)^2)-\Omega(\psi)+\Omega(1) + r - \eta, \ \ G_y = \psi_x \psi_y. 
\end{equation}
Thus, for $y = \eta(x)$ and $0<x<x_t$ we have
\begin{equation}\label{thm:wavelength:grad1}
G_x + \eta' G_y = -\tfrac12 ((\psi_x)^2+(\psi_y)^2)- \eta + r  =  0.
\end{equation}
This shows that $G$ is constant along the upper boundary of $D_\eta^0$ and because $G(0,\eta(0)) = 0$ by the definition we see that the constant is zero. Now it follows from \eqref{thm:wavelength:grad} that $G$ is also zero on the vertical sides of $D_\eta^0$. Thus, we can periodically extend $G$ into the whole region $D_\eta$ as an odd function. 

A useful property of $G$ is the following. Assume that a function $\psi$ just solves 
\[
\Delta \psi  +\omega(\psi) = 0 \ \ \text{in} \ \ D_\eta
\]
and is constant on $y = \eta$. Then we can define the function $G$ as above, which satisfies \eqref{thm:wavelength:grad}. Now we can claim that the Bernoulli equation for $\psi$ is equivalent to $G = const$ on $y=\eta$ as follows from \eqref{thm:wavelength:grad1}. This can be very useful when study weak limits of solutions to \eqref{sys:stream}. We will use this idea for proving Theorem \ref{thm:wavelength}.

Beside function $G$ we will also use some facts about the flow force function
\[
F(x,y) = \int_0^y \left\{ \tfrac12(\psi_y^2 - \psi_x^2) + \Omega(\psi)-\Omega(1) + r - y' \right\} dy'.
\]
This definition is inspired by the formula for the flow force constant, which is the boundary value of $F$:
\[
F = \FF \ \ \text{on} \ \ y = \eta.
\]
We note that 
\[
F_x = \psi_x \psi_y, \ \ F_y = \tfrac12(\psi_y^2 - \psi_x^2) + \Omega(\psi)-\Omega(1) + r - y.
\]
In particular gradients of $F$ and $G$ are bounded by a constant depending only on $\omega$ and $r$. Now we are ready to give a proof of Theorem \ref{thm:wavelength}.

\begin{proof}[Proof of Theorem \ref{thm:wavelength}]
	Assume that the claim is wrong and there exists a sequence of solutions $(\psi^{(j)},\eta^{(j)})$ whose periods tend to zero. We assume that all solutions are even $x$-variable and have a crest at the vertical line $x=0$. Let us put
	\[
		d = \liminf_{j \to +\infty} \min_\R \eta^{(j)}.
	\]
	Note that $d \geq d_-(r) > 0$; see \cite{Kozlov2015}. Without loss of generality we can assume that $\psi^{(j)}$ is convergent in $C^{\gamma}(\R \times [0,d])$ and $\psi$ is the limit function (we use compactness here, since gradients of $\psi^{(j)}$ are uniformly bounded). Furthermore, we can assume that functions $\psi^{(j)}$ are convergent in $C^{2,\gamma}(\R \times [0,d'])$ for any $0 < d' < d$. This is true because the strip $\R \times [0,d']$ is separated from the upper boundary of $D_{\eta^{(j)}}$ uniformly in $j$ and then the classical regularity theory (see \cite[Theorem 6.19]{GilbargTrudinger01}) gives
	\[
		\| \psi^{(j)}\|_{C^{2,\gamma}(\R \times [0,d'])} \leq C(r,\omega, d'), \ \ j \geq 1.
	\]
	Now it is left to use a compactness argument. 
	
	Let us point out some properties of the limiting function $\psi$. The definition of $d$ and the boundary relation \eqref{sys:stream:kintop} imply that $\psi = 1$ on $y = d$, while $\psi$ is independent of $x$-variable (since periods tend to zero by the assumption). Furthermore, the convergence in spaces $C^{2,\gamma}(\R \times [0,d']), \ d'\in (0,d)$ ensures that the limiting function solves the same elliptic equation:
	\[
		\Delta \psi + \omega(\psi) = 0 \ \ \text{for} \ \ 0 <y < d.
	\]
	In particular, $\psi$ is continuously differentiable up to the upper boundary $y = d$ (again by \cite[Theorem 6.19]{GilbargTrudinger01}). Let us show that $\psi$ is subject to the Bernoulli equation there with the same Bernoulli constant $r$. To show that we consider functions $G^{(j)}$ and $G$ defined by formula \eqref{function:G} for functions $(\psi^{(j)},\eta^{(j)})$ and $(\psi,\eta)$ respectively. Because gradients of $G^{(j)}$ are uniformly bounded we can assume that $G^{(j)}$ are convergent in $C^{\gamma}(\R \times [0,d])$. Now using the fact that $\psi^{(j)}$ converge to $\psi$ in every space $C^{2,\gamma}(\R \times [0,d'])$ with $d'\in (0,d)$ we conclude that the limit of $G^{(j)}$ must be $G$. On the other hand each $G^{(j)}$ is zero on $y = \eta^{(j)}$, while periods tend to zero. This shows that $G^{(j)}(x,d)$ tends to zero as $j \to +\infty$ for all $x$, so that $G = 0$ on $y = d$, which gives the Bernoulli equation for $\psi$ on $y = d$ as follows from \eqref{thm:wavelength:grad1}. Therefore, we either have $d = d_-(r)$ or $d = d_+(r)$. In particular, $d < r$, which will be important later.
	
	Let us prove now that 
	\[
	\limsup_{j \to +\infty} \max_\R \eta^{(j)} = d.
	\]
	Assume it is not true and (without loss of generality) $\eta^{(j)}(0) \to \hat{d} > d$ as $j \to +\infty$. Now we will consider flow force functions $F$ and $F^{(j)}$ at $x=0$, defined for $(\psi,\eta)$ and $(\psi^{(j)},\eta^{(j)})$ respectively. Let $\FF$ and $\FF_j$ be the corresponding flow force constants. Then clearly $\FF_j \to \FF$ (after passing to a subsequence) as $j \to +\infty$. This happens because gradients of $F^{(j)}$ are uniformly bounded and we can pass to a subsequence that converges in $C^{\gamma}(\R \times [0,d])$. Now because wavelengths tend to zero we obtain $|F^{(j)}(0,d) - \FF_j| \to 0$, which implies $\FF_j \to \FF$. On the other hand, we also have $F^{(j)}(0,\eta^{(j)}(0)) = \FF_j \to \FF$, which shows that
	\[
		\lim_{j \to +\infty} \int_d^{\eta^{(j)}(0)}   \left\{ \tfrac12 (\psi_y^{(j)}(0,y'))^2  + \Omega(\psi^{(j)}(0,y'))-\Omega(1) + r - y' \right\} dy'  = 0.
	\]
	Again, because wavelengths tend to zero, we have $\psi^{(j)}(0,y') \to 1$ as $j \to +\infty$ uniformly in $ y' \in [d,\eta^{(j)}(0)]$. Thus, we obtain 
	\[
		\lim_{j \to +\infty} \int_d^{\eta^{(j)}(0)} (r - y') dy' = \hat{d} (r - \tfrac12 \hat{d}) - d (r - \tfrac12 d) = (\hat{d}-d)(r - \tfrac12(d + \hat{d})) = 0.
	\]
	Now because $d < r$ and $\hat{d} \leq r$ we conclude $\hat{d} = d$, leading the a contradiction. This shows that amplitudes of $\eta_j$ tend to zero. But then inequality \eqref{thm:slope:bounds1} from Proposition \ref{thm:slope} shows that vertical derivatives $(\psi^{(j)})_y$ are uniformly separated from zero, so that $\max|\eta^{(j)}_x|$ are uniformly bounded. Therefore, $(\psi^{(j)},\eta^{(j)})$ is a smooth (by Proposition \ref{p:delta}) small-amplitude solution, whose wavelength must be close to $2\pi \tau^{-1} > 0$, where $\tau$ is the root of the dispersion equation (see \cite{Kozlov2017a, Groves2008, Kozlov2014} for details). This leads to a contradiction, because we assumed that wavelengths of $(\psi^{(j)},\eta^{(j)})$ tend to zero.
	
\end{proof}
\subsection{Global bifurcation theory}

		In this section we will formulate Theorem \ref{th:global} taken from \cite{BuffoniToland03}, which is the main tool of proving Theorem \ref{thm:vort}. Let 
\[
{\mathcal F}(x,\lambda) = 0, \ \ {\mathcal F}:U \subset X \times \R \to Y,
\]
be an analytic function between Banach spaces $X \times \R$ and $Y$, defined on an open subset $U \subset X \times \R$. Let $U_j$ be an increasing family of bounded open sets in $X \times \R$ such that $U = \cup_{j} U_j$ and $\overline{U_j} \subseteq U$ for all $j \in \mathbb{N}$.  Furthermore, we assume

\begin{itemize}
	\item[(A1)] $F(0,\lambda) = 0$ for all $\lambda \in \R$. In particular, $\{0\} \times \R \subset U$.
	\item[(A2)] for any $(x,\lambda) \in {\mathcal C}$ the linear operator $\partial_x F(x,\lambda):X \to Y$ is a Fredholm operator of index zero, where
	\[
	{\mathcal C} = \{ (x,\lambda) \in U: \ \ F(x,\lambda) = 0 \}.
	\]
	\item[(A3)] There are $\lambda_0 \in \R$ and $x_0 \in X$, $x_0 \neq 0$ such that
	\[
	\textrm{ker} (\partial_x F(0,\lambda_0)) = \{ t x_0: \ \ t\in \R\}
	\]
	and
	\[
	\partial^2_{x,\lambda}F(0,\lambda_0)[x_0,1] \notin \textrm{Im}(\partial_x F(0,\lambda_0)).
	\]
\end{itemize}
Assumptions (A1)-(A3) guarantee the existence of a smooth local curve of solutions
\[
{\mathcal C}_\dagger = \{ (x_\dagger(t),\lambda_\dagger(t)) \in {\mathcal C}, \ \ 0 \leq t < \epsilon \}
\]
for some small $\epsilon > 0$. Furthermore, we have
\[
\partial_t x_\dagger(0) = x_0.
\]
In order to extend ${\mathcal C}_\dagger$, we need the following additional properties.
\begin{itemize}
	\item[(B1)] All closed and bounded subsets of $\mathcal C$ are compact in $X \times \R$.
\end{itemize}

We say that a closed set ${\mathcal K}$ is a positive cone in $X$, if $tx \in X$ for all $t \geq 0$, provided $x\in {\mathcal K}$. Finally, we formulate three more conditions.
\begin{itemize}
	\item[(C1)] There is a positive cone ${\mathcal K} \subset X$ such that ${\mathcal C}_\dagger \subset {\mathcal K}$.
	\item[(C2)] The set ${\mathcal C}_0 \cap ({\mathcal K} \times \R)$ is open in ${\mathcal C}$, where
	\[
	{\mathcal C}_0 = \{(x,\lambda) \in {\mathcal C}: \ \ x \neq 0 \}.
	\]
	\item[(C3)] If $\xi \in \textrm{ker} (\partial_x F(0,\lambda)) \cap {\mathcal K}$ for some $\lambda \in \R$, then $\xi = t x_0$ with $t \geq 0$ and $\lambda = \lambda_0$.
\end{itemize}		

\begin{theorem}[Global bifurcation] \label{th:global}
	Assuming all conditions (A1)--(C3) hold true, there is a smooth curve 
	\[
	{\mathcal C}_\star = \{ (x_\star(t),\lambda_\star(t)) \in {\mathcal C}: \ \ t \in [0,+\infty) \}
	\]
	extending ${\mathcal C}_\dagger$ such that for any $j \in \mathbb{N}$ there is $t_j>0$ such that $(x_\star(t_j),\lambda_\star(t_j)) \notin U_j$.	Furthermore, ${\mathcal C}_\star$ admits a local analytic re-parametrization around every $t>0$, that is for any $t_0 > 0$ there is a continuous and injective function $\rho:[-1,1] \to \R$ such that $\rho(0) = t_0$ for which function $t \mapsto (x_\star(\rho(t)),\lambda_\star(\rho(t)))$ is analytic on $(-1,1)$.
\end{theorem}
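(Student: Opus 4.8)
The plan is to assemble the theorem from three classical ingredients: a local bifurcation theorem of Crandall--Rabinowitz type to produce the initial arc ${\mathcal C}_\dagger$, the real-analytic global continuation theory of Dancer to extend it to a maximal connected curve obeying a global alternative, and finally the cone conditions (C1)--(C3) to select the unbounded branch of that alternative. The analyticity of $F$ together with the Fredholm-index-zero hypothesis (A2) is what makes the analytic machinery available, while (B1) supplies the compactness that keeps the continuation from stalling. First I would note that (A1)--(A3) are precisely the Crandall--Rabinowitz conditions (trivial branch, one-dimensional kernel spanned by $x_0$, and the transversality $\partial^2_{x,\lambda}F(0,\lambda_0)[x_0,1]\notin \mathrm{Im}(\partial_x F(0,\lambda_0))$), so a Lyapunov--Schmidt reduction yields a unique analytic arc ${\mathcal C}_\dagger$ of nontrivial solutions through $(0,\lambda_0)$ with $\partial_t x_\dagger(0)=x_0$.

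Next I would run the analytic continuation. At any solution where $\partial_x F$ is invertible, the analytic implicit function theorem continues the curve uniquely and analytically. At a singular point, (A2) and analyticity permit Dancer's structural result: the zero set of $F$ near such a point is a finite union of analytic arcs meeting only at that point, so one may choose an outgoing arc and continue ${\mathcal C}_\star$ across it. Iterating this, and using (B1) to guarantee that bounded closed portions of ${\mathcal C}$ are compact (so limit points exist and the parameter $t$ can always be pushed forward), produces a maximal curve ${\mathcal C}_\star$ on $[0,+\infty)$ carrying the asserted local analytic reparametrizations. Dancer's theorem then furnishes the global alternative: either ${\mathcal C}_\star$ eventually leaves every $U_j$, or it is a closed loop, or it reconnects with the trivial branch $\{0\}\times\R$ at some bifurcation point.

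Finally I would eliminate the last two possibilities using the cone. By (C1) the curve starts inside ${\mathcal K}\times\R$. Since ${\mathcal K}$ is closed, the set of $t$ with $(x_\star(t),\lambda_\star(t))\in{\mathcal K}\times\R$ is closed; and by (C2) the nontrivial solutions lying in ${\mathcal K}\times\R$ form an open subset of ${\mathcal C}$, so on the nontrivial part of the curve this set is also open. By connectedness the entire curve therefore remains in the cone. Condition (C3) then forces $(0,\lambda_0)$ to be the only trivial point that ${\mathcal C}_\star$ can approach from within the cone, which excludes reconnection at a distinct bifurcation point, while the local uniqueness from Crandall--Rabinowitz at $(0,\lambda_0)$ excludes the closed loop. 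Hence the unbounded alternative must hold: for every $j\in\mathbb{N}$ there is $t_j>0$ with $(x_\star(t_j),\lambda_\star(t_j))\notin U_j$, which is the claim.

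I expect the main obstacle to lie in the analytic continuation step, that is, in proving that the solution set near a singular Fredholm point decomposes into finitely many analytic arcs and that a consistent global choice of continuation exists; this is the core of Dancer's theory, resting on a Weierstrass preparation argument for the finite-dimensional reduced equation together with the compactness from (B1). The cone argument, by contrast, is essentially a connectedness bookkeeping once the global alternative is in hand. Since the statement is quoted from \cite{BuffoniToland03}, in the paper itself it suffices to cite that source, but the sketch above indicates the route one would follow to reconstruct it.
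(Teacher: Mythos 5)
The paper does not actually prove this theorem: it is stated as a quotation of the analytic global bifurcation theorem of Buffoni and Toland \cite{BuffoniToland03}, in a formulation the authors note is close to Theorem 6 of \cite{Constantin2016}, so the paper's entire ``proof'' is that citation. Your sketch is a correct outline of precisely the argument behind the cited result --- Crandall--Rabinowitz for the local arc via (A1)--(A3), Dancer-style analytic continuation through singular Fredholm points using (A2), (B1) and Weierstrass preparation, and the cone conditions (C1)--(C3) to exclude a closed loop or a return to the trivial branch --- so it matches the paper's (implicit) approach and nothing further is needed.
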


Our formulation is slightly different to Theorem \ref{th:global} from \cite{BuffoniToland03} and is very close to Theorem 6 in \cite{Constantin2016} (see a discussion therein).

\section{Proof of Theorem \ref{thm:vort} and Corollary \ref{cor:vort}}

In order to apply Theorem \ref{th:global} we need to reformulate our problem in a suitable way. Note that for any $r \in (R_c,d_0)$ there is a subcritial stream solution $H(p;s_-(r))$ corresponding to the same Bernoulli constant $r$. Thus, we put
\[
w = h-H
\]
and using \eqref{sys:hs} obtain the corresponding problem for $w$, which is
\begin{subequations}\label{sys:ws}
	\begin{alignat}{2}
	\label{sys:ws:main}
	- \left(\frac{w_p}{H_p^3}\right)_p-\lambda^2 \frac{w_{qq}}{H_p} + N_1(w,\lambda)&=0 &\qquad& \text{in } S,\\
	\label{sys:ws:top}
	w_p - H_p^{3} w + N_2(w,\lambda)  &= 0 &\quad& \text{on }p=1,\\
	\label{sys:ws:bot} 
	w &= 0 &\quad& \text{on }p=0,
	\end{alignat}
\end{subequations}
where
\[
\begin{split}
&  N_1(w,\lambda) = \lambda^2 \left\{ \left(\frac{w_q w_p}{h_p H_p}\right)_q + \left(\frac{w_q^2}{2h_p^2}\right)_p  \right\} + \left(\frac{w_p^2(2h_p+H_p)}{2H_p^3h_p^2}\right)_p, \\
& N_2(w,\lambda) = - \frac{w_p^2(2h_p+H_p)}{2h_p^2} - \lambda^2 \frac{w_q^2 H_p^3}{2 h_p^2}.
\end{split}
\]
Let $X$ be the subspace of $C^{2,\gamma}(\overline{S})$ which consists of even and $2\pi$-periodic in $q$-variable functions $w$ such that $w(q,0) = 0$ for $q \in \R$. We also define the range spaces $Y_1 = C^{0,\gamma}_{per}(\overline{S})$ and $Y_2 = C^{1,\gamma}_{per}(\R)$. Next we put
\[
\begin{split}
& U = \{ w \in X : 0 < w_p + H_p < + \infty \ \ \text{in} \ \ \overline{S} \} \cup \{ \lambda \in \R: \ \lambda > 0 \}, \ \ \\
& U_j = \{  w \in X : 0 < w_p + H_p < j \ \ \text{in} \ \ \overline{S} \ \ \text{and} \ \ \|w\|<j \} \cup \{  \lambda \in \R: \frac1j <  \lambda  < j   \}, \ j\in\mathbb{N}.
\end{split}
\]
Now we define
\[
{\mathcal F}(w,\lambda) = \left(- \left(\frac{w_p}{H_p^3}\right)_p-\lambda^2 \frac{w_{qq}}{H_p} + N_1(w,\lambda),w_p - H_p^{3} w + N_2(w,\lambda) \right),
\]
so that ${\mathcal F}:U \to Y := Y_1 \times Y_2$ is analytic. Finally, we define a positive cone as
\[
{\mathcal K} = \{w \in X : \ w(0,1) > 0 \}.
\]
\subsection{Dispersion equation}

The kernel of $D_{w}{\mathcal F}(0,\lambda)$ consists of all $w \in X$ such that
\begin{subequations}\label{sys:ker}
	\begin{alignat}{2}
	\label{sys:ker:main}
	- \left(\frac{w_p}{H_p^3}\right)_p-\lambda^2 \frac{w_{qq}}{H_p} &=0 &\qquad& \text{in } S,\\
	\label{sys:ker:top}
	w_p - H_p^{3} w  &= 0 &\quad& \text{on }p=1,\\
	\label{sys:ker:bot} 
	w &= 0 &\quad& \text{on }p=0.
	\end{alignat}
\end{subequations}
Separating variables we find that this problem has a non-zero solution if and only if there is a function $\varphi \neq 0$  such that
\begin{subequations}\label{sys:ker}
	\begin{alignat}{2}
	\label{sys:disp:main}
	- \left(\frac{\varphi_p}{H_p^3}\right)_p = -\lambda^2 \frac{\varphi }{H_p} &=0 &\qquad& \text{on }  \ (0,1),\\
	\label{sys:disp:top}
	\varphi_p(1) - H_p^{3}(1) \varphi(1)  &= 0, &\quad&\\
	\label{sys:disp:bot} 
	\varphi(0) &= 0. &\quad&
	\end{alignat}
\end{subequations}
This Sturm-Liouville problem is well known (see \cite{Kozlov2014a, Kozlov2017a, Kozlov2014}) and appears as a dispersion equation for steady water waves. For subcritical stream solutions $H = H(p;s_-(r))$ there is a unique $\lambda_0 > 0$ such that the problem above has a non-trivial eigenfunction $\varphi_0(p)$. Thus, the kernel of operator $D_{w}{\mathcal F}(0,\lambda)$ is trivial for $\lambda \neq \lambda_0$ and coincides with
\[
\{ t \varphi_0(p) \cos(q), \ \ t \in \R \}
\]
for $\lambda = \lambda_0$. Note that since we only consider even solutions the kernel is at most one-dimensional.

\subsection{Application of Theorem \ref{th:global} and proof of Theorem \ref{thm:vort}}

Now we are ready to apply Theorem \ref{th:global}. The first property (A1) follows immediately from the definition of ${\mathcal F}$. For the validity of (A2) we refer to \cite[Lemma 4.3]{ConstantinStrauss04}, while (A3) follows from \cite{Kozlov2014}. Furthermore, (C1)-(C3) follow from definitions. Thus we only need to prove (B1). Let $K$ be a bounded and closed in $X$ subset of $U$. Then 
\[
\sup_{(w,\lambda) \in K} \sup_S H_p+w_p = \delta^{-1} < + \infty, \ \ \sup_{(w,\lambda) \in K} \lambda + \frac{1}{\lambda} <  +\infty, \ \ \sup_{(w,\lambda) \in K} \sup_S |w_q| = M  < + \infty.
\]
For a given $(w,\lambda) \in K$ let $\psi$ be the corresponding to $h = H + w$ stream function in original variables and $\eta$ is the surface profile. Then $\inf_{D_\eta} \psi_y \geq \delta > 0$ uniformly in $K$, while $\max |\eta'| \leq M$. Then Proposition \ref{prop:psibounds} shows that norms
\[
\|\psi\|_{C^{3,\alpha}(\overline{D_\eta})},  \|\eta\|_{C^{3,\alpha}(\R)} \leq C(r,\delta)
\] 
are bounded uniformly in $K$. Thus, norms $\|w\|_{C^{3,\alpha}(\overline{S})}$ are also uniformly bounded which gives the compactness.

Now application of Theorem \ref{th:global} gives a curve of solutions ${\mathcal C}_\star$ and a sequence $t_j \to +\infty$ such that ${\mathcal C}_\star(t_j) = (w_j, \lambda_j)$ lies outside $U_j$. Just as in \cite[Lemma 5.1,Lemma 5.2]{ConstantinStrauss04} we show that every solution ${\mathcal C}_\star(t)$ represents a Stokes wave, even and monotone between each neighbour crest and trough. Furthermore, the definition of $U_j$ implies that one of the following options takes place:
\begin{itemize}
	\item[(1)] $\|w_j\|_{C^{2,\gamma}(\overline{S})} \to \infty$ as $j \to +\infty$;
	\item[(2)] $\sup_{S} (H_p + (w_j)_p)\to \infty$ as $j \to +\infty$;
	\item[(3)] $\lambda_j \to 0$ as $j \to +\infty$;
	\item[(4)] $\lambda_j \to +\infty$ as $j \to +\infty$;	
\end{itemize}
Let us check claims (I)-(IV) of the theorem. Assume that $(IV)$ does not hold (no wave breaking). Then in view of Proposition \ref{p:delta} options (1) and (2) are equivalent, that is (1) holds true if and only if (2) is true. Thus, (1) or (2) gives (II), while (III) follows from (3). The remaining option (4) is forbidden by Theorem \ref{thm:wavelength}.

\begin{proof}[Proof of Corollary \ref{cor:vort}] Let $(\psi^{(j)},\eta^{(j)})$ be a family of Stokes waves provided by Theorem \ref{thm:vort} and corresponding to the Bernoulli constant $r$. Let us show that there exists a subsequence, converging to a weak solution $(\psi,\eta)$ of the problem \eqref{sys:stream}, which is regular everywhere outside the stagnation points, where $\eta = r$. This can be done as follows. For a given $\epsilon > 0$ we define truncated functions
	\[
		\eta^{(j)}_\epsilon(x) = \begin{cases}
		& \eta^{(j)}(x), \ \ \eta^{(j)}(x) \leq r-\epsilon, \\
		& r-\epsilon, \ \ \eta^{(j)}(x) > r-\epsilon.
		\end{cases}
	\]
Let us also consider the corresponding  domains $D^{(j)}_\epsilon = D_{\eta^{(j)}_\epsilon}$. The interior and boundary Harnack inequalities (see \cite[Theorem 8.20, Theorem 8.26]{GilbargTrudinger01}) guarantee that
\[
	\inf_{D^{(j)}_\epsilon} \psi^{(j)}_y \geq C(\epsilon) > 0, 
\]
where constant $C(\epsilon)$ depends only on $\epsilon, r, \omega$ and is independent of $j$. Now application of Proposition \ref{p:delta} gives
\[
	\|\psi^{(j)}\|_{C^{3,\gamma}(D^{(j)}_\epsilon)} \leq C(\epsilon).	
\]
Thus, we can find a subsequence $(\psi^{(j_k)},\eta^{(j_k)})$ for which $\{\eta^{(j_k)}_\epsilon\}$ is convergent in $C^{\gamma}(\R)$ to a Lipschitz function $\eta_\epsilon$, while stream functions $\psi^{(j_k)}$ converge to some $\psi_\epsilon$ in every space $C^{2,\gamma}(K)$ for all compact subsets of $D_{\eta_\epsilon} \cup \R \times \{0\}$. We repeat this argument for a sequence $\epsilon_k \to 0$, every time passing to a subsequence, and obtain a pair $(\psi,\eta)$, where $\eta$ is smooth everywhere expect, possibly, points where $\eta = r$ (single crests). The corresponding stream function has a bounded gradient inside $D_\eta$, solves \eqref{sys:stream:lap} in $D_\eta$, and is also smooth in $\overline{D_\eta} \cap \{y < r\}$. The boundary relations \eqref{sys:stream:kintop} and \eqref{sys:stream:bern} are satisfied everywhere along the boundary except points, where $\eta = r$. Thus, $(\psi,\eta)$ is a weak solution in the sense of \cite{Varvaruca09}, which is an extreme Stokes wave if option (I) takes place, regular solitary wave in case of (II) or an extreme solitary wave in (III). Note that option (IV) implies (I) in view of Proposition \ref{thm:slope}. Now it is left to apply \cite[Theorem 5.2]{Varvaruca09}. The Lipschitz regularity of the surface profile around stagnation points follows from \cite[Theorem 5.2]{Varvaruca09}, while outside stagnation points it is guaranteed by Proposition \ref{thm:slope}. In a similar way one proves $C^{1}-$regularity for the stream function along the boundary. To ensure global regularity (in particular, continuity of $\psi_y$ at stagnation points) it is enough to use the Boundary Harnack principle for $\psi_y$ from \cite[Theorem 8.25]{GilbargTrudinger01}. The continuity for $\psi_x$ now follows from the relation $\psi_x = -\eta' \psi_y$, valid along the top boundary.

Finally, no solitary waves exist for $F < 2$ as follows from \cite{Wheeler15b} and no solitary waves exist for $r \geq d_0$, which gives the desired value of $r_\star$. This finishes our proof of Corollary \ref{cor:vort}.

\end{proof}

\begin{proof}[Proof of Corollary \ref{cor:irr}] The statement follows from Corollary \ref{cor:vort} taking into account results of \cite{Varvaruca2011} and the fact that no solitary waves exist for $F \geq 2$ (see \cite{starr}), where $F$ is the Froude number. The latter bound for $F$ is equivalent to $r \geq 2^{2/3}$.
\end{proof}

\section{Proof of Theorem \ref{thm:reg}}

This section is entirely dedicated to the irrotational case $\omega = 0$.

\subsection{Flow force function formulation}

It well known that the water wave problem admits another constant of motion (see \cite{BENJAMIN1984} for more details), the flow force, defined as 
\begin{equation} \label{flowforce}
\FF = \int_0^{\eta}(\tfrac12(\psi_y^2 - \psi_x^2) - y + r )\, dy.
\end{equation}	
The latter expression is independent of $x$, which makes it of special importance for spatial dynamics (see \cite{Groves2008} and references therein). Beside, it plays an important role in classification of steady waves; see \cite{Benjamin95, Kozlov2017a, Lokharu2020a}.

Based on the definition of the flow force constant $\FF$, we introduce the corresponding flow force function
\begin{equation} \label{fff}
F = \int_0^{y}(\tfrac12(\psi_y^2(x,y') - \psi_x^2(x,y')) - y' + r )\, dy'.
\end{equation}
Just as in \cite{Basu2020} we can reformulate the water wave problem in terms of the function $F$. It is straightforward to obtain
\begin{equation} \label{fff:grad}
F_x = \psi_x \psi_y, \ \ F_y = \tfrac12(\psi_y^2 - \psi_x^2) - y + r.
\end{equation}
Thus, we arrive to an equivalent formulation given by
\begin{subequations}\label{sys:fff}
	\begin{alignat}{2}
	\label{sys:fff:lap}
	\Delta F + 1&=0 &\qquad& \text{for } 0 < y < \eta,\\
	\label{sys:fff:bern}
	\tfrac 12 (-\eta' F_x + F_y) +  y  &= r &\quad& \text{on }y=\eta,\\
	\label{sys:fff:kintop} 
	F  &= \FF &\quad& \text{on }y=\eta,\\
	\label{sys:fff:kinbot} 
	F  &= 0 &\quad& \text{on }y=0.
	\end{alignat}
\end{subequations}

This new formulation in terms of flow force function $F$ is of special interest as shown recently in \cite{Lokharu2020a}. However, in the present paper we only need $F$ as a comparison function for proving the next result.

\begin{proposition} \label{prop:psibounds}
	For any $r>r_c$ and any stream function $\psi \in C^{2,\gamma}(\overline{D_\eta})$ solving \eqref{sys:stream} we have
	\begin{equation}\label{psiy:bot}
		\frac{1}{r} < \psi_y(x,0) < \frac{4}{r}
	\end{equation}	
	for all $x\in \R$, where $r$ is the corresponding Bernoulli constant.
\end{proposition}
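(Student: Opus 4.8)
The plan is to deduce both inequalities from a comparison with the laminar flows already supplied by Proposition~\ref{p:psiy:bot}. In the irrotational case $\omega\equiv 0$ the stream solution of depth $d$ is the linear profile $U(y;d)=y/d$, so $U_y(0;d)=1/d$ and the inequalities of Proposition~\ref{p:psiy:bot} become
\[
\frac{1}{\max\eta}\le \psi_y(x,0)\le \frac{1}{\min\eta},\qquad x\in\R .
\]
Hence it is enough to prove the two elevation estimates $\max\eta<r$ and $\min\eta> r/4$: the former yields $\psi_y(x,0)\ge 1/\max\eta>1/r$, and the latter yields $\psi_y(x,0)\le 1/\min\eta<4/r$. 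The lower bound is then immediate, since Bernoulli's relation \eqref{sys:stream:bern} gives $\tfrac12\abs{\nabla\psi}^2+\eta=r$ on $y=\eta$, and for a regular $C^{2,\gamma}$ solution the absence of stagnation forces $\abs{\nabla\psi}^2>0$ there, so $\eta<r$ everywhere and in particular $\max\eta<r$.

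For the upper bound I would use the flow force function $F$ as a comparison function, exactly as announced after its introduction. On the flat bottom $\psi=0$ gives $\psi_x(x,0)=0$, so by \eqref{fff:grad} we have $F_y(x,0)=\tfrac12\psi_y^2(x,0)+r$; thus an upper bound for $\psi_y(x,0)$ is equivalent to an upper bound for $F_y(x,0)$. I would compare $F$ with a concave quadratic barrier $\Phi(y)=by-\tfrac12 y^2$ chosen so that $\Phi(0)=0=F(x,0)$ and $\Phi\ge \FF=F$ on the upper boundary. Since $\Delta F=\Delta\Phi=-1$, the difference $F-\Phi$ is harmonic, vanishes on the bottom and is non-positive on the top, hence $F-\Phi\le 0$ throughout by the maximum principle; as the minimum is attained along the bottom, the Hopf lemma gives $F_y(x,0)\le\Phi'(0)=b$, and therefore $\psi_y^2(x,0)\le 2(b-r)$. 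The stated bound $\psi_y(x,0)<4/r$ then follows provided the barrier can be taken with $b$ sufficiently close to $r$.

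The main obstacle is precisely this quantitative step: the admissible slope $b$ is governed by the constraint $\Phi\ge \FF$ on $[\min\eta,\max\eta]$, i.e.\ by the size of the flow force constant $\FF$ together with the range of the surface elevation — equivalently, one must bound the trough height $\min\eta$ from below. I expect this to be the genuine difficulty, because such a lower bound cannot come from the maximum principle for $\psi$ alone: the subharmonic function $\tfrac12\abs{\nabla\psi}^2+y$ is only $\le r$ on the top and yields the much weaker estimate $\psi_y(x,0)\le\sqrt{2\,(r-\min\eta)}$. The extra leverage has to be extracted from the constancy of the flow force, namely from the fact that $F=\FF$ holds simultaneously on the vertical symmetry lines through the crest and through the trough (where $\psi_x=0$), which couples $\FF$, $r$ and the two depths and pins down $\min\eta$. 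Establishing this coupling quantitatively, and checking that it delivers the constants $1/r$ and $4/r$, is where the real work lies.
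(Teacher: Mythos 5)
Your lower bound is fine and is essentially the paper's: comparing with the irrotational stream of depth $r$ (equivalently, invoking Proposition~\ref{p:psiy:bot} together with $\max\eta<r$, which follows from \eqref{sys:stream:bern} and the absence of surface stagnation) gives $\psi_y(x,0)>1/r$. The upper bound, however, has a genuine gap, and the route you sketch cannot close it. Your barrier $\Phi(y)=by-\tfrac12y^2$ must satisfy $\Phi(\eta(x))\ge\FF$ for all $x$, and since $\Phi$ is increasing on $[0,r]$ the binding constraint sits at the trough: $b\ge \FF/\check\eta+\tfrac12\check\eta$ with $\check\eta=\min\eta$. But evaluating \eqref{fff} on a trough line of a symmetric Stokes wave (where $\psi_x=0$) and using Cauchy--Schwarz with $\int_0^{\check\eta}\psi_y\,dy=1$ gives
\[
\FF=\int_0^{\check\eta}\Bigl(\tfrac12\psi_y^2-y+r\Bigr)\,dy\;\ge\;\check\eta\bigl(r-\tfrac12\check\eta\bigr)+\frac{1}{2\check\eta},
\]
so that $b-r\ge\tfrac12\check\eta^{-2}$ and your conclusion $\psi_y^2(x,0)\le 2(b-r)$ can never be better than $\psi_y(x,0)\le 1/\check\eta$ --- which is exactly the upper bound of Proposition~\ref{p:psiy:bot} you started from, not $4/r$. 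Since $\check\eta$ can be as small as $d_-(r)\sim(2r)^{-1/2}$ for large $r$, no admissible $b$ is ``close to $r$'', and the sufficient condition $\min\eta>r/4$ proposed in your first paragraph is false in general. You correctly sense that the missing leverage is a quantitative bound involving the flow force, but you leave precisely that step open.

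The paper closes it by running the comparison in the opposite direction with a barrier that is not a function of $y$ alone: take $f=F-\FF\psi$. Then $\Delta f=-1$ and $f$ vanishes identically on \emph{both} boundary components (since $F=0=\psi$ on the bottom and $F=\FF$, $\psi=1$ on the surface), so $f>0$ in $D_\eta$ and the Hopf lemma at the bottom gives the \emph{lower} bound $F_y(x,0)\ge\FF\,\psi_y(x,0)$, i.e.\ $\tfrac12\psi_y^2(x,0)+r\ge\FF\,\psi_y(x,0)$. Combined with the elementary estimate $\tfrac12\psi_y^2(x,0)\le r$ this yields $\psi_y(x,0)\le 2r/\FF$, and the constant $4/r$ then follows from the inequality $\FF>\tfrac12 r^2$, which is not derived here but imported from \cite{Lokharu2020a}. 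That external lower bound on the flow force is the quantitative coupling you were looking for; without it (or an equivalent), the stated constant is out of reach.
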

\begin{proof}
	The left inequality can be obtained by comparing $(\psi,\eta)$ to the stream solution $U(y) = \frac1r y$, whose depth is $r$. Indeed, let $h$ and $H$ be the height functions of $\psi$ and $U$ respectively. Then function $w = h-H$ is negative for $p=1$ by the choice of $U$, while $w = 0$ on $p=0$. Thus, $w \leq 0$ by the maximum principle and therefore $w_p(q,0) < 0$ for all $q \in \R$ by the Hopf lemma. The latter is equivalent to $\psi_y(0,y) > U_y(0,y) = \frac1r$.
	
	In order to obtain the upper bound we need to consider the function $f = F - \FF \psi$, where $F$ is the flow force function \eqref{fff} corresponding to $\psi$. It is clear that $\Delta f = -1$ and $f$ is zero on the boundary of $D_\eta$. Thus, it positive inside $D_\eta$ by the maximum principle and $f_y(x,0) > 0$ by the Hopf lemma. This gives
	\[
		\tfrac12\psi_y^2(x,0) + r \geq \FF \psi_y(x,0).
	\]
	Now using \eqref{sys:stream:bern} we conclude $\psi_y(x,0) \leq \FF^{-1} 2r$. It is left to use inequality $\FF > \tfrac12 r^2$ obtained in \cite{Lokharu2020a}.
\end{proof}

Proof of Theorem \ref{thm:reg} consists of several steps, including interior and boundary estimates, which are based on \eqref{psi:rminy}, which to be proved first. The upper bound in \eqref{psi:rminy} is well known. It can be obtain by applying the maximum principle to the super-harmonic function 
\[
g = \tfrac12(\psi_x^2 + \psi_y^2) + y.
\]
On the bottom $g_y = 1$, so the global maximum can not be attained there. On the surface $g = r$ by \eqref{sys:stream:bern}, so that $g < r$ in $D_\eta$, which gives $\tfrac12 \psi_y^2 < r-y$ as desired.

The bottom inequality in \eqref{psiy:bot} is an important step in proving \eqref{psi:reg} and seems to be new. Our argument involves a function
\[
f = \frac{F_y}{r-y}.
\]
A direct computation shows that
\[
(r-y)\Delta f - f_y = 0 \ \ \text{in} \ \ D_\eta.
\]
Thus, the global minimum of $f$ is attained at the boundary by the maximum principle (note that we consider sufficiently regular solutions, which forbids the presence of stagnation and so $r-y$ is separated from zero in $D_\eta$). On the surface
\[
f = \frac{2}{1 + \eta'^2} > \tfrac85,
\]
since $|\eta'| \leq 1/2$ for all Stokes waves; see \cite{Amick1987}. The constant $\tfrac85$ is not of any particular importance and we only need that it is greater than $1$. On the bottom
\[
f = \frac{\tfrac12 \psi_y^2 + r}{r} > 1 + \frac{1}{2r^3},
\]
where we used \eqref{psiy:bot}. Now since $r > \tfrac32$ we obtain $f > 1 + \frac{1}{2r^3}$ in $D_\eta$. Using the definition of $F$, we conclude
\[
F_y = \tfrac12 (\psi_y^2 - \psi_x^2) + r - y > \left(1 + \frac{1}{2r^3}\right) (r-y).
\]
This leads to \eqref{psi:rminy}, where $C_2 = (2r^3)^{-3}$.

Let us prove \eqref{psi:reg}. Let $(x_1,y_1)$ and $(x_2,y_2)$ be two points in $D_\eta$ and $\delta = \sqrt{(x_1-x_2)^2+(y_1-y_2)^2}$. If both $r-y_1$ and $r-y_2$ are less than $\delta$, then 
\[
|\psi_y(x_1,y_1)-\psi_y(x_2,y_2)| \leq |\psi_y(x_1,y_1)|+|\psi_y(x_2,y_2)|\leq|r-y_1|^{\tfrac12} + |r-y_2|^{\tfrac12} \leq 2 \delta^{\tfrac12}
\]
by \eqref{psi:rminy}. Thus, without loss of generality, we can assume that $r-y_2 \geq \delta$. Now if $r-y_1 < \tfrac14 C_2\delta$, then $\psi_y(x_2,y_2) > \psi_y(x_1,y_1)$ by \eqref{psi:rminy} and we have
\[
\begin{split}
\psi_y(x_2,y_2)-\psi_y(x_1,y_1) & \leq |r-y_2|^{\tfrac12} =  |r-y_2|^{\tfrac12} - |r-y_1|^{\tfrac12} +  |r-y_1|^{\tfrac12} \\
& \leq \frac{y_2 - y_1}{|r-y_2|^{\tfrac12}} + \tfrac12 \delta^{\tfrac12}\leq \tfrac32 \delta^{\tfrac12}.
\end{split}
\]
Therefore, we only need to consider the case $r-y_2 \geq \delta$ and $r-y_1 \geq \tfrac14 C_2\delta = C_3 \delta$. The boundary case is covered by the following lemma.

\begin{lemma} \label{l:reg:boundary}
	Let $y_1 = \eta(x_1)$ and $\delta_1:= r-y_1 > 0$. There exist $\alpha \in (0,1)$ and $C > 0$ such that 
	\[
	\|\psi\|_{C^{1,\alpha}(B\cap D_\eta)} \leq C,
	\]
	where $B$ is the ball of radius $\tfrac12 \delta_1$ at $(x_1,y_1)$. Both constants $\alpha$ and $C$ depend only on $r$ and are independent of $\delta_1$ and $(x_1,y_1)$.
\end{lemma}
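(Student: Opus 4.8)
The plan is to use the exact scaling invariance of the irrotational problem together with the non-degeneracy furnished by \eqref{psi:rminy}, reducing the lemma to a single scale-invariant boundary estimate. Since $y_1=\eta(x_1)$ lies on the free surface we have $\psi(x_1,y_1)=1$; setting $\delta_1=r-y_1>0$ I would introduce the blown-up unknown
\[
\tilde{\psi}(X,Y)=\delta_1^{-3/2}\bigl(\psi(x_1+\delta_1 X,\,y_1+\delta_1 Y)-1\bigr),
\qquad
\tilde{\eta}(X)=\delta_1^{-1}\bigl(\eta(x_1+\delta_1 X)-y_1\bigr).
\]
A direct computation gives $\nabla_{X,Y}\tilde{\psi}=\delta_1^{-1/2}\nabla_{x,y}\psi$, so $\tilde{\psi}$ is harmonic, the rescaled surface $Y=\tilde{\eta}(X)$ passes through the origin with $|\tilde{\eta}'|\le\tfrac12$ by the Amick bound $|\eta'|\le\tfrac12$ (see \cite{Amick1987}), and on it $\tilde{\psi}=0$ while the Bernoulli law \eqref{sys:stream:bern} becomes $\tfrac12|\nabla\tilde{\psi}|^2+Y=1$. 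The decisive point is that every point of $B$ has $r-y\ge\tfrac12\delta_1$, so \eqref{psi:rminy} rescales to the two-sided bound $\sqrt{C_2/2}\le\tilde{\psi}_Y$ and $|\nabla\tilde{\psi}|\le\sqrt3$ throughout the rescaled ball. Thus the blown-up problem is uniformly non-degenerate, with $\tilde{\psi}_Y$ bounded below and $\nabla\tilde{\psi}$ bounded above by constants depending only on $r$ (through $C_2$) and on the universal slope bound $\tfrac12$, and independent of $\delta_1$ and the base point.

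On this unit scale $\tilde{\psi}$ solves a non-degenerate free boundary problem whose boundary $Y=\tilde{\eta}(X)$ is only Lipschitz; this lack of smoothness is the main obstacle, since boundary Schauder theory does not apply directly. To circumvent it I would pass to the partial hodograph variables $(q,p)=(X,\tilde{\psi})$ of Section 2.3, straightening the Lipschitz surface to the flat line $p=1$. Because $\tilde{\psi}_Y$ is bounded above and below, the height function $\tilde{h}$ has $\tilde{h}_p=1/\tilde{\psi}_Y$ and $\tilde{h}_q=-\tilde{\psi}_X/\tilde{\psi}_Y$ both bounded by \eqref{height:stream}, so the height equation \eqref{sys:h:main} (with $\Omega=0$) is uniformly elliptic with a uniformly bounded-gradient solution on a fixed flat strip, and \eqref{sys:h:top} is a regular, uniformly oblique boundary condition. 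Consequently the local regularity mechanism behind Proposition \ref{p:delta}—which only uses the non-degeneracy $\inf\psi_y\ge\delta$ and the slope bound, not the particular value of the Bernoulli constant—applies and yields $\|\tilde{h}\|_{C^{1,\alpha}}\le C(r)$, hence $\|\tilde{\psi}\|_{C^{1,\alpha}}\le C(r)$, on the rescaled half-ball of radius $\tfrac12$, for some $\alpha\in(0,1)$ and $C$ depending only on $r$. To leave genuine interior room one runs the estimate on a concentric ball of radius $\tfrac34\delta_1$, where \eqref{psi:rminy} still gives non-degeneracy since there $r-y\ge\tfrac14\delta_1$.

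Finally I would undo the scaling. Since $\nabla_{x,y}\psi=\delta_1^{1/2}\nabla_{X,Y}\tilde{\psi}$ and rescaled distances are $\delta_1^{-1}$ times the original ones, the Hölder seminorm transforms as $[\nabla\psi]_{C^{\alpha}(B\cap D_\eta)}=\delta_1^{\,1/2-\alpha}\,[\nabla\tilde{\psi}]_{C^{\alpha}}$. As $0<\delta_1=r-y_1\le r$, restricting $\alpha$ to $(0,\tfrac12)$ keeps the factor $\delta_1^{1/2-\alpha}$ bounded by $r^{1/2-\alpha}$, while $|\nabla\psi|\le\sqrt{2(r-y)}\le\sqrt{2r}$ by \eqref{psi:rminy} and the sup-norm of $\psi$ is trivially bounded. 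Collecting these estimates gives $\|\psi\|_{C^{1,\alpha}(B\cap D_\eta)}\le C$ with $C$ and $\alpha$ depending only on $r$, as claimed. The reason the constant does not blow up as the base point approaches a crest is exactly that the degeneration of $\psi_y$ like $\sqrt{r-y}$ is matched by the gradient scale $\sqrt{\delta_1}$, rendering the blown-up problem uniformly non-degenerate; and the threshold $\alpha<\tfrac12$ is precisely what is needed for the scaling factor to remain bounded.
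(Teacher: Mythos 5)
Your proposal is correct and follows essentially the same route as the paper: the isotropic blow-up of $(x,y)$ by $\delta_1$ together with the $\delta_1^{3/2}$ normalization of $\psi$, followed by the hodograph transform, is (up to sign) exactly the paper's anisotropic scaling $\hat{p}=\delta_1^{-1/2}(1-p)$ followed by the further rescaling $(\tilde{q},\tilde{p})=(\hat{q},\hat{p})/\delta_1$, and both arguments then use \eqref{psi:rminy} for uniform non-degeneracy, a Lieberman-type oblique-derivative estimate for the height function, and the restriction $\alpha<\tfrac12$ when undoing the scaling. The only cosmetic difference is that the paper differentiates the boundary condition in $q$ and applies Lieberman's Theorem 2.4 to $\hat{h}_{\hat{q}}$, whereas you invoke the regularity mechanism of Proposition \ref{p:delta} directly on the uniformly elliptic quasilinear height problem; this is the same mechanism.
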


\begin{proof}
	Note that within $B \cap D_\eta$ we have 
	\begin{equation} \label{l1:eq1}
	C^{-1} \delta_1 < \psi_y^2 \leq C \delta_1,
	\end{equation}
	where $C$ depends only on $r$. Let $h$ be the height function corresponding to $\psi$, which solves a homogenous elliptic problem
	\[
		\frac{1+h_q^2}{h_p^2} h_{pp} - 2 \frac{h_q}{h_p} h_{qp} + h_{qq} = 0,
	\]
	subject to the boundary condition \eqref{sys:h:top}. If $\delta_1$ is small, then $h_p$ is large and we can not use any classical regularity results directly. In order to avoid the singularity, we perform the following anisotropic scaling 
	\[
		\hat{q} = q-x_1, \ \ \hat{p} = \delta_1^{-\tfrac12} (1-p), 
	\]
	where $\hat{h}(\hat{q},\hat{p}) = h(q,p)$ is the new unknown function, solving
	\begin{equation}\label{eq:hath}
	\frac{1+\hat{h}_{\hat{q}}^2}{\hat{h}_{\hat{p}}^2} \hat{h}_{\hat{p}\hat{p}} - 2 \frac{\hat{h}_{\hat{q}}}{h_{\hat{p}}} \hat{h}_{\hat{q}\hat{p}} + \hat{h}_{\hat{q}\hat{q}} = 0.
	\end{equation}
	This problem shall be considered in a neighbourhood of $(0,0)$, which is specified below. 	
	Note that if $\omega \in B$, then $(\hat{q},\hat{p})$ corresponding to $(q,p) = (x,\psi\omega)$ belongs to the rectangle
	\[
		\hat{B} = \{ (\hat{q},\hat{p}): \	|\hat{q}| \leq C \delta_1, \ 0 \leq \hat{p} \leq C \delta_1 \}, 
	\]
	where $C$ depends only on $r$. Indeed, the inequality for $\hat{q}$ is trivial, while
	\[
	\hat{p} = \delta_1^{-\tfrac12} (1-p) \leq \delta_1^{-\tfrac12} \max_{y_1 \geq y' \leq \eta(x_1)} \psi(x_1,y') (\eta(x_1)-y_1) \leq C \delta_1,
	\]
	which follows from \eqref{l1:eq1} and because $\eta(x_1)-y_1 \leq r-y_1= \delta_1$. The main advantage of the scaling is that we have
	\[
	   C^{-1} \leq \hat{h}_{\hat{p}} \leq C \ \ \text{in} \ \ \hat{B}
	\]
	with some constant, depending only on $r$. Thus, the linear second order operator in \eqref{eq:hath} is uniformly elliptic in $\hat{B}$, while is subject to the boundary relation	
	\begin{equation}\label{eq:hath:bern}
	\frac{1 + \hat{h}_{\hat{q}}^2}{2} = \frac{r-h}{\delta_1} \hat{h}_{\hat{p}}^2 \ \ \text{on} \ \ \hat{p}=0,
	\end{equation}	
	which follows from the Bernoulli equation \eqref{sys:h:top} for $h$. Our aim is to prove uniform a H\"older regularity for $\hat{h}_{\hat{q}}$ and $\hat{h}_{\hat{p}}$ in $\tfrac23 \hat{B}$. First we prove this for $f = \hat{h}_{\hat{q}}$. For this purpose we differentiate \eqref{eq:hath} and \eqref{eq:hath:bern} with respect to $\hat{q}$, we obtain a system for $f$, given by
	\[
	\begin{split}
		& \frac{1+\hat{h}_{\hat{q}}^2}{\hat{h}_{\hat{p}}^2} f_{\hat{p}\hat{p}} - 2 \frac{\hat{h}_{\hat{q}}}{h_{\hat{p}}} f_{\hat{q}\hat{p}} + f_{\hat{q}\hat{q}} + a f_{\hat{q}}f_{\hat{p}} + b f_{\hat{p}}^2 = 0, \\
		& \delta_1 \hat{h}_{\hat{q}} f_{\hat{q}} + f \hat{h}_{\hat{p}}^2 - 2 \frac{r-h}{\delta_1} \delta_1 f_{\hat{p}} \ \ \text{on} \ \ \hat{p}=0,
	\end{split}		
	\]	
	where $a,b$ are $L^{\infty}$ in $\hat{B}$ (with norms depending only on $r$). Now we can scale $\hat{q} = \delta_1 \tilde{q}$ and $\hat{p} = \delta_1 \tilde{p}$, where $\tilde{f}(\tilde{q},\tilde{p}) = f(\hat{q},\hat{p})$. This leads to the problem
	\[
		\begin{split}
			& \frac{1+\hat{h}_{\hat{q}}^2}{\hat{h}_{\hat{p}}^2} \tilde{f}_{\tilde{p}\tilde{p}} - 2 \frac{\hat{h}_{\hat{q}}}{h_{\hat{p}}} \tilde{f}_{\tilde{q}\tilde{p}} + \tilde{f}_{\tilde{q}\tilde{q}} + a f_{\hat{q}}f_{\hat{p}} + b f_{\hat{p}}^2 = 0, \\
			& \hat{h}_{\hat{q}} \tilde{f}_{\tilde{q}} + \tilde{f} \hat{h}_{\hat{p}}^2 - 2 \frac{r-h}{\delta_1}  \tilde{f}_{\tilde{p}} \ \ \text{on} \ \ \tilde{p}=0,
		\end{split}		
	\]
	where all coefficients are bounded and separated from zero in 
	\[
	\tilde{B} = \{ (\tilde{q},\tilde{p}): \	|\tilde{q}| \leq C, \ 0 \leq \tilde{p} \leq C  \}.
	\]
	Now we can apply Theorem 2.4 in \cite{Lieberman1986}, which provides constants $\alpha \in (0,1)$ and $\tilde{C}>0$ depending only on $r$ such that
	\[
	\|\tilde{f}\|_{C^{\alpha}(\tfrac23\tilde{B})} \leq \tilde{C}.
	\]
	In particular, we arrive to the desired estimate for $f = \hat{h}_{\hat{q}}$:
	\begin{equation} \label{reg:f}
	\|f\|_{C^{\alpha}(\tfrac23 \hat{B})} \leq \tilde{C} \delta_1^{-\alpha}.
	\end{equation}
	A similar bound for $\hat{h}_{\hat{p}}$ at the boundary can be obtained from \eqref{eq:hath:bern}, leading to
	\begin{equation} \label{reg:hathp}
	\|\hat{h}_{\hat{p}}\|_{C^{\alpha}(\hat{\Gamma})} \leq \tilde{C} \delta_1^{- \alpha},
	\end{equation}
	where $\hat{\Gamma} = \tfrac23\hat{B} \cap \{\hat{p}=0\}$. Therefore, relations \eqref{reg:f} and \eqref{reg:hathp} lead to
	\[
		\Big\|\frac{\psi_x}{\psi_y}\Big\|_{C^{\alpha}(\Gamma)} \leq \tilde{C} \delta_1^{-\alpha} , \ \Big\|\frac{1}{\psi_y}\Big\|_{C^{\alpha}(\Gamma)} \leq \tilde{C} \delta_1^{-\alpha - \tfrac12},
	\]
	where $\Gamma = \{\tfrac23 B \cap \partial D_\eta\}$. Now the classical regularity theory (see Theorem 8.29 \cite{GilbargTrudinger01}) gives
	\[
		\Big\|\frac{\psi_x}{\psi_y}\Big\|_{C^{\alpha}(\tfrac12 B \cap D_\eta)} \leq \tilde{C} \delta_1^{-\alpha} , \ \Big\|\frac{1}{\psi_y}\Big\|_{C^{\alpha}(\tfrac12 B \cap D_\eta)} \leq \tilde{C} \delta_1^{-\alpha - \tfrac12},
	\]	
	Taking into account \eqref{l1:eq1}, we obtain
	\[
		\|\psi_x\|_{C^{\alpha}(\tfrac12 B \cap D_\eta)} , \ \|\psi_y\|_{C^{\alpha}(\tfrac12 B \cap D_\eta)} \leq \tilde{C} \delta_1^{-\alpha + \tfrac12} \leq C(r),	
	\]
	where $\alpha$ is assumed to be less than $\tfrac12$. This finishes the proof of Lemma \ref{l:reg:boundary}.
\end{proof}

An interior estimate for points that are close to the boundary is provided below.

\begin{lemma} \label{l:reg:int} Let $\delta_2:=\min{(\eta(x_1)-y_1,\eta(x_2)-y_2)}$ and $\delta < \tfrac14\delta_2$. Then for any $\theta \in (0,1)$ there is $C > 0$ such that 
	\[
	\|\psi\|_{C^{1,\theta}(B)} \leq C \delta_2^{-\tfrac12},
	\]
	where $B$ is the ball of radius $\tfrac12 \delta_2$ at $(x_1,y_1)$. The constant $C$ depends only on $r$ and $\theta$.
\end{lemma}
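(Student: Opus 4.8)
The plan is to prove an interior Schauder estimate for $\psi$ in a ball of radius $\tfrac12\delta_2$ about $(x_1,y_1)$, exploiting that this ball stays a definite distance away from the free surface so that $\psi_y$ is comparable to $\delta_2^{1/2}$ and no singularity of the type handled in Lemma~\ref{l:reg:boundary} occurs. The key observation is that on $B$ we have the two-sided bound
\[
C^{-1}\delta_2 \le \psi_y^2 \le C\delta_2,
\]
which follows from \eqref{psi:rminy} together with the definition of $\delta_2$ (every point of $B$ satisfies $r-y \sim \delta_2$, since $\eta(x_1)-y_1 \ge \delta_2$ and $\eta \le r$). This means the gradient of $\psi$ does not vanish on $B$, so the flow is away from stagnation there, and interior elliptic regularity is available.

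First I would introduce the anisotropic rescaling $\tilde x = \delta_2^{-1/2}(x-x_1)$, $\tilde y = \delta_2^{-1/2}(y-y_1)$ and set $\tilde\psi(\tilde x,\tilde y) = \delta_2^{-1/2}\psi(x,y)$, so that $\tilde\psi$ has gradient of unit order on the rescaled ball $\tilde B$ of radius $\tfrac12$ about the origin. Under this scaling the equation $\Delta\psi = 0$ (recall $\omega=0$ throughout this section) is preserved, and the normalized gradient bound $C^{-1}\le |\nabla\tilde\psi| \le C$ holds on $\tilde B$. Next I would apply the standard interior Schauder theory for harmonic functions (or more generally Theorem~6.19 in \cite{GilbargTrudinger01}): since $\tilde\psi$ is harmonic with a uniform $C^0$ bound on $\tilde B$, it is smooth in the interior, and for any $\theta\in(0,1)$ one obtains
\[
\|\tilde\psi\|_{C^{1,\theta}(\tfrac12\tilde B)} \le C(r,\theta).
\]
Undoing the scaling then yields $\|\psi\|_{C^{1,\theta}(B)} \le C\delta_2^{-1/2}$, which is exactly the claimed estimate, since $\|\tilde\psi\|_{C^{1,\theta}}$ bounds $\delta_2^{-1/2}\|\psi\|_{C^{1,\theta}}$ up to the scaling factors (the $C^1$ part of the norm scales by $\delta_2^{-1/2}$ and the H\"older seminorm by a further nonnegative power of $\delta_2$ that only helps when $\delta_2$ is small).

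The main obstacle, and the only place requiring care, is bookkeeping the scaling exponents: one must verify that after the substitution the constant multiplying the Schauder bound is genuinely $\delta_2^{-1/2}$ and not some worse negative power, and in particular that the H\"older seminorm does not introduce an additional blow-up. This is why the hypothesis $\delta<\tfrac14\delta_2$ matters: it guarantees that both reference points $(x_1,y_1)$ and $(x_2,y_2)$ lie well inside $B$ and that $B$ itself is separated from the free surface by a distance comparable to $\delta_2$, so the constant in the two-sided bound on $\psi_y$ depends only on $r$. Once the gradient is under control the argument is routine; the role of this lemma, together with Lemma~\ref{l:reg:boundary}, is to cover the remaining regime $r-y_2\ge\delta$, $r-y_1\ge C_3\delta$ in the proof of \eqref{psi:reg}, combining the interior and boundary estimates to yield the uniform $C^{1,\alpha}$ bound of Theorem~\ref{thm:reg}.
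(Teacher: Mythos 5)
Your overall strategy (zoom in to a ball at the scale $\delta_2$ on which the gradient is under control, then apply standard elliptic estimates) is in the right spirit, but the step you call the ``key observation'' --- the two-sided bound $C^{-1}\delta_2\le\psi_y^2\le C\delta_2$ on $B$ --- does not follow from what you wrote, and its upper half is false in general. From $\eta(x_1)-y_1\ge\delta_2$ and $\eta\le r$ you only obtain $r-y\ge\eta(x)-y\gtrsim\delta_2$ on $B$, i.e.\ the \emph{lower} bound $\psi_y^2\ge C_2(r-y)\gtrsim\delta_2$ via \eqref{psi:rminy}. You do not obtain $r-y\lesssim\delta_2$: indeed $r-y=(r-\eta(x))+(\eta(x)-y)$, and $r-\eta(x)$ can be of order one while $\delta_2$ is arbitrarily small (a point just below the surface of a wave far from stagnation). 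In that regime $\psi_y^2\ge C_2(r-y)$ is itself of order one, so $\psi_y^2\le C\delta_2$ fails. This matters quantitatively: without $|\nabla\psi|\lesssim\delta_2^{1/2}$ on a slight enlargement of $B$, the interior derivative estimate for the harmonic function $\psi$ only gives $|\nabla^2\psi|\lesssim\delta_2^{-1}\sup|\nabla\psi|\lesssim\delta_2^{-1}$, hence $[\nabla\psi]_{C^\theta(B)}\lesssim\delta_2^{-\theta}$, which is weaker than the asserted $\delta_2^{-1/2}$ precisely when $\theta>\tfrac12$ --- and the application in the proof of Theorem \ref{thm:reg} takes $\theta=\alpha+\tfrac12>\tfrac12$. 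So the pointwise bound $|\nabla\psi|\lesssim\delta_2^{1/2}$ is the entire content of the lemma and cannot be waved through; it is only available when $r-\eta(x_1)\lesssim\eta(x_1)-y_1$, the complementary regime being the one covered by the boundary estimate of Lemma \ref{l:reg:boundary}.

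There are also bookkeeping slips in the rescaling: with $\tilde x=\delta_2^{-1/2}(x-x_1)$ and $\tilde\psi=\delta_2^{-1/2}\psi$ one has $\nabla_{\tilde x}\tilde\psi=\nabla_x\psi$ (the two factors of $\delta_2^{\pm 1/2}$ cancel), so the gradient is not normalized to unit order, and $B$ maps to a ball of radius $\tfrac12\delta_2^{1/2}$ rather than $\tfrac12$; moreover $\|\tilde\psi\|_{C^0}=\delta_2^{-1/2}\|\psi\|_{C^0}$ blows up as $\delta_2\to0$, so one must subtract the constant $\psi(x_1,y_1)$ before invoking a Schauder estimate driven by a $C^0$ bound. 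For comparison, the paper argues by a different (cutoff) localization: it takes $\chi$ adapted to $B$ with $\|\nabla\chi\|_\infty\lesssim\delta_2^{-1}$ and $\|\Delta\chi\|_\infty\lesssim\delta_2^{-2}$, writes $\Delta\bigl(\chi(1-\psi)\bigr)=2\nabla\chi\cdot\nabla\psi+\Delta\chi\,(1-\psi)$, bounds the right-hand side in $L^\infty$ by $C\delta_2^{-1/2}$, and applies Theorem 8.33 of \cite{GilbargTrudinger01}. Note that this bound on the right-hand side rests on exactly the estimates $|\nabla\psi|\lesssim\delta_2^{1/2}$ and $|1-\psi|\lesssim\delta_2^{3/2}$ that your argument also needs, so you have correctly identified the decisive quantity --- but your stated justification for it is not valid, and that gap must be closed (or the hypotheses under which the lemma is invoked must be restricted) before either localization scheme delivers the claimed $\delta_2^{-1/2}$.
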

\begin{proof}
	Let $\chi$ be a smooth function such that $\chi=1$ on B, $\chi = 0$ outside $\tfrac23 B$ and 
	\[
	\|\nabla\chi\|_\infty \leq C \delta_2^{-1}, \ \ \|\Delta\chi\|_\infty \leq C \delta_2^{-2}.
	\]
	Then for $f = \chi (1-\psi)$ we have
	\[
	\Delta f = 2\nabla \chi \nabla \psi + \Delta \chi (1-\psi).
	\]
	Note that
	\[
	\|\nabla \chi \nabla \psi\|_{\infty} \leq C\delta_2^{-\tfrac12}, \ \ \|\Delta \chi (1-\psi)\|_\infty \leq C \delta_2^{-\tfrac12}.
	\]
	Therefore, for any $\theta \in (0,1)$ we have $\|f\|_{C^{1,\theta}(B)} \leq C \delta_2^{-\tfrac12}$ by Theorem 8.33 \cite{GilbargTrudinger01}, so that
	\[
		\|\psi\|_{C^{1,\theta}(B)} \leq C \delta_2^{-\tfrac12}
	\]	
	as desired.
\end{proof}

Now we can finish our proof of Theorem \ref{thm:reg}. Indeed, we are left to consider the case  $r-y_2 \geq \delta$ and $r-y_1 \geq \tfrac14 C_2\delta = C_3 \delta$. In view of Lemma \ref{l:reg:boundary} we can additionally assume that $\eta(x_1)-y_1$ and $\eta(x_2)-y_2$ are greater or comparable with $\delta$. Then it is left to apply Lemma \ref{l:reg:int} (splitting the segment connecting $(x_1,y_1)$ and $(x_2,y_2)$, if necessary), which yields
\[
|\nabla\psi(x_1,y_1)-\nabla\psi(x_2,y_2)| \leq C \delta_2^{-\tfrac12} \delta^{\theta} \leq C \delta^{\theta - \tfrac12}.
\]
It is left to choose $\theta = \alpha + \tfrac12$, where $\alpha$ is the constant from Lemma \ref{l:reg:boundary}.
\vspace{4mm}

\noindent {\bf Acknowledgements.} V.~K. was supported by the Swedish Research
Council (VR), 2017-03837.

\bibliographystyle{siam}
\bibliography{bibliography}
\end{document}